\documentclass{article}
\usepackage{amscd,amsmath,amssymb,amsthm,amsfonts,epsfig,graphics}
\usepackage{cite}
\usepackage{longtable}
\usepackage{siunitx}
\usepackage{booktabs} 
\usepackage{tabularx} 
\usepackage{tabu} 
\usepackage{graphics}
\newtheorem{theorem}{Theorem}
\theoremstyle{plain}

\newtheorem{lemma}{Lemma}

\numberwithin{equation}{section}
\numberwithin{lemma}{section}
\numberwithin{theorem}{section}
\numberwithin{corollory}{section}
\numberwithin{proposition}{section}
\begin{document}

		\title{On the Lie nilpotency index of  modular  group algebras}
		\author{
			Suchi Bhatt and Harish Chandra \footnote{\text{CONTACT} hcmsc@mmmut.ac.in, Department of Mathematics and Scientific Computing,  Madan Mohan Malaviya University of Technology, Gorakhpur (U.P.), India-273010.}%
			\\	Department of Mathematics and Scientific Computing\\  Madan Mohan Malaviya University of Technology \\Gorakhpur (U.P.), India
		}
		\date{}			
		\maketitle
		
		\begin{abstract}
			Let $KG$ be the modular group algebra of an arbitrary group $G$  over  a field  $K$  of characteristic $p>0$. It is seen that if $KG$ is Lie nilpotent, then its lower as well as upper Lie nilpotency index is at least  $p+1$. The classification of  group algebras $KG$ with upper Lie nilpotency index $t^{L}(KG)$ upto  $9p-7$ have already been determined. In this paper, we  classify the modular group algebra $KG$ for which the upper Lie nilpotency index  is $10p-8$.
			
		\end{abstract}
		
		\smallskip
		\noindent \text{KEYWORDS}\; Group algebras, Lie nilpotency index
		
		\smallskip
		\noindent \text{2010} MATHEMATICS SUBJECT CLASSIFICATION: 16S34, 17B30

     \section{Introduction}
     Let $KG$ be the group algebra of a group $G$ over a field $K$ of characteristic $p>0$. The group algebra $KG$ can be regarded as a associated Lie algebra  of $KG$, via the Lie commutator $[x,y] = xy-yx$, $\forall x, y\in KG$. Set $[x_{1}, x_{2},...x_{n}] = [[x_{1}, x_{2},...x_{n-1}],x_{n}]$, where  $x_{1}, x_{2},...x_{n}\in KG$. The $n^{th}$ lower Lie power $KG^{[n]}$ of $KG$ is the associated ideal generated by the Lie commutators $[x_{1},x_{2},...x_{n}]$, where $KG^{[1]} = KG$. By induction, the $n^{th}$ upper Lie power $KG^{(n)}$ of $KG$ is the associated ideal generated by all the Lie commutators $[x,y]$, where $x\in KG^{(n-1)}$, $y\in KG$ and $KG^{(1)} = KG$. $KG$ is said to be upper Lie nilpotent (lower Lie nilpotent) if there exists $m$ such that $KG^{(m)} = 0 \, (KG^{[m]} = 0)$. The minimal non-negative integer $m$ such that $KG^{(m)} =0$ and $KG^{[m]} = 0$  is known as the upper Lie nilpotency index and lower Lie nilpotency index of $KG$, denoted by $t^{L}(KG)$ and $t_{L}(KG)$ respectively. It is well known that, if $KG$ is Lie nilpotent, then $p+1\leq t_{L}(KG)\leq |G'|+1$ (see \cite { Shs2, Sharmavisht}).  According to Bhandari and Passi \cite{BP}, if $p>3$ then $t^{L}(KG) = t_{L}(KG)$. But the question when $t_{L}(KG) = t^{L}(KG)$ for $p= 2, 3$ is still open in general. The subgroup $D_{(m),K}(G) = G\cap (1+KG^{(m)}), m\geq 1$ is called the $m^{th}$ Lie dimension subgroup of $G$ and by Passi \cite{passi},  we have \
     $$D_{(m),K}(G) =  \prod_{(i-1)p^{j}\geq m-1}\gamma_{i}(G)^{p^j}.$$
      Let $p^{d_{(m)}} = |D_{(m),K}(G):D_{(m+1),K}(G)|$,  $m\geq 2$. If $KG$ is Lie nilpotent such that $|G'|  = p^{n}$, then according to Jenning's theory \cite{Shalev6}, we have $t^{L}(KG) = 2+(p-1)\Sigma_{m\geq 1} \; md_{(m+1)}$ and $\Sigma_{m\geq 2} \; d_{(m)} = n$. \
     Shalev \cite{Shalev7} initiated the study of group algebras with maximum Lie nilpotency index. This problem was completed by \cite{bovdi3}. Results on the next smaller Lie nilpotency index can be easily seen in  \cite{bovdi2,bovdi3,bovdi4,bovdi5}. In \cite{BK}, Bovdi  and Kurdics  discussed the upper and lower Lie nilpotency index of a modular group algebra of  metabelian group $G$ and determine the nilpotency class of the group of units. Recently, we have some results  on classification of Lie nilpotent group algebras of Lie nilpotency index upto 14 (see \cite{ChS,rsrks,rrm,sbhcms}). Furthermore, group algebras with minimal Lie nilpotency index $p+1$ have been classified by Sharma and Bist \cite{Sharmavisht}. A complete description of the Lie nilpotent group algebras with next possible nilpotency indices $2p$, $3p-1$, $4p-2$, $5p-3$, $6p-4$, $7p-5$, $8p-6$ and $9p-7$ is given in \cite{ms,msbs,msbs2,msbs3,Shalev4}. In this article, we will classify group algebras with upper Lie nilpotency index $10p-8$. For a prime $p$ and  positive integer $x$, $\vartheta_{p'}(x)$ is the maximal divisor of $x$ which is relatively prime to $p$. Also $S(n,m)$ denotes the small group number $m$ of order $n$ from the Small Group Library-Gap \cite{gapg}. We use the following Lemma throughout our work:\
        
     \begin{lemma}(\cite{Shalev7})
     	Let $K$ be a field with $Char K = p> 0 $ and  $G$ be a nilpotent group such that $|G'|= p^{n}$ and $exp(G') = p^{l}$.
     	\begin{enumerate} \label{2l1}
     		
     		\item If $ d_{(l+1)} = 0$ for some $l<pm$, then $d_{(pm+1)}\leq d_{(m+1)}$.
     		\item If $d_{(m+1)} = 0$, then $d_{(s+1)} = 0$ for all $ s\geq m$ with $\vartheta_{p'}(s)\geq \vartheta_{p'}(m)$ where $\vartheta_{p'}(x)$ is the maximal divisor of $x$ which is relatively prime to $p$.
     	\end{enumerate}
     	
     \end{lemma}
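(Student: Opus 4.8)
The plan is to reduce both parts to the combinatorics of Passi's formula $D_{(m),K}(G)=\prod_{(i-1)p^{j}\ge m-1}\gamma_{i}(G)^{p^{j}}$, organised through the associated graded Lie algebra of Jennings' theory. Put $\mathcal L_{m}=D_{(m),K}(G)/D_{(m+1),K}(G)$ and $\mathcal L=\bigoplus_{m\ge 1}\mathcal L_{m}$; then $\mathcal L_{m}$ is elementary abelian of rank $d_{(m)}$, the bracket $[\mathcal L_{a},\mathcal L_{b}]\subseteq\mathcal L_{a+b}$ coming from group commutators is degree-additive (since $[\gamma_{i}(G),\gamma_{j}(G)]\subseteq\gamma_{i+j}(G)$), the $p$-th power map $\pi$ sends $\mathcal L_{m}$ into $\mathcal L_{p(m-1)+1}$ (matching the exponent $(i-1)p^{j}$ in Passi's formula), and $\mathcal L$ is generated by $\mathcal L_{1}=G/G'$ under bracket and $\pi$. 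Two structural facts do the work. First, $D_{(2),K}(G)=G'$, so $B:=\bigoplus_{m\ge 2}\mathcal L_{m}$ is the graded object attached to $G'$: hence $\sum_{m\ge 2}d_{(m)}=\dim_{\mathbb F_{p}}B=n$, and, since every homogeneous element of $B$ lifts to an element of $G'$ and $\exp(G')=p^{l}$, the $l$-fold iterate $\pi^{l}$ annihilates $B$. Second, generation by $\mathcal L_{1}$ yields, for $k\ge 2$, a recursion expressing $\mathcal L_{k}$ through $[\mathcal L_{1},\mathcal L_{k-1}]$ together with the $\pi$-image of $\mathcal L_{(k-1)/p+1}$ when $p\mid k-1$. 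In these terms, $d_{(m+1)}=0$ says exactly that, writing $m=p^{a}q$ with $\gcd(q,p)=1$, all the ``weight-$m$'' generators $\gamma_{1+p^{a-j}q}(G)^{p^{j}}$ ($0\le j\le a$) lie in $D_{(m+2),K}(G)$.

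\emph{Part (2).} Assume $d_{(m+1)}=0$ and, with $m=p^{a}q$ as above, fix $s\ge m$ with $\vartheta_{p'}(s)=r\ge q$; write $s=p^{b}r$. When $r=q$ (so $b\ge a$) the top generator is handled by pushing $\gamma_{1+q}(G)^{p^{a}}\subseteq D_{(m+2),K}(G)$ up the $\pi$-tower, using the standard containment $D_{(w),K}(G)^{p}\subseteq D_{(p(w-1)+1),K}(G)$ to obtain $\gamma_{1+q}(G)^{p^{a'}}\subseteq D_{(p^{a'-a}m+2),K}(G)$ for every $a'\ge a$, so that $\gamma_{1+q}(G)^{p^{b}}\subseteq D_{(p^{b}q+2),K}(G)=D_{(s+2),K}(G)$; the lower weight-$s$ generators follow by the same tower argument combined with the $\mathcal L_{1}$-action. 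The case $r>q$, and more generally the assertion that $d_{(s+1)}=0$ for \emph{exactly} the $s$ with $\vartheta_{p'}(s)\ge\vartheta_{p'}(m)$, is obtained by running these inclusions through the recursion and using $\pi^{l}|_{B}=0$ to force the bracket- and $\pi$-contributions accumulating at the degrees strictly between $m$ and $s$ to collapse. Determining which weight-$s$ generators are genuinely new in each intermediate degree, and verifying that they are all absorbed, is the crux.

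\emph{Part (1).} The map $g\mapsto g^{p}$ induces an $\mathbb F_{p}$-linear map $\mathcal L_{m+1}\to\mathcal L_{pm+1}$ — linearity holds because the Hall--Petrescu correction terms in $(xy)^{p}$ land in components strictly deeper than $\mathcal L_{pm+1}$ and because $\lambda^{p}=\lambda$ in $\mathbb F_{p}$ — whose image has $\mathbb F_{p}$-dimension at most $d_{(m+1)}$. Since that image always lies inside $\mathcal L_{pm+1}$, it suffices to prove that $\mathcal L_{pm+1}$ contains nothing further, i.e.\ that the part of $\mathcal L_{pm+1}$ produced by brackets is already absorbed. Writing $\mathcal L_{pm+1}$ through the recursion in terms of $[\mathcal L_{1},\mathcal L_{pm}]$ and lower data, the hypothesis of part (1) — a vanishing $d_{(i+1)}=0$ occurring below $pm$, reinforced by $\pi^{l}|_{B}=0$ — is precisely what kills every bracket contribution picked up along the way, giving $\mathcal L_{pm+1}=\pi(\mathcal L_{m+1})$ and hence $d_{(pm+1)}=\dim_{\mathbb F_{p}}\mathcal L_{pm+1}\le d_{(m+1)}$. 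In both parts I expect the genuine obstacle to be the bookkeeping inside the Jennings filtration — tracking degree by degree which $\gamma_{i}(G)^{p^{j}}$ are genuinely new rather than already swallowed by a deeper term $D_{(\cdot),K}(G)$ — and, in part (1), controlling the bracket contributions to $\mathcal L_{pm+1}$ precisely enough to conclude that they vanish under the stated hypothesis.
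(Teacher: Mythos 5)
The paper itself offers no proof of this lemma (it is quoted from Shalev), so the only fair comparison is with the standard argument, and against that yardstick your text is a programme rather than a proof: at the decisive point of each part you defer ("determining which weight-$s$ generators are genuinely new \dots is the crux", "I expect the genuine obstacle to be the bookkeeping"). The gap is substantive in part (2). The tools you actually put on the table --- monotonicity $\gamma_{i'}(G)\subseteq\gamma_{i}(G)$ for $i'\ge i$, $[D_{(w),K}(G),G]\subseteq D_{(w+1),K}(G)$ and $D_{(w),K}(G)^{p}\subseteq D_{(p(w-1)+1),K}(G)$ --- do not suffice when $\vartheta_{p'}(s)>\vartheta_{p'}(m)$. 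Concretely, take $p=5$, $m=5$, $s=10$: the hypothesis $d_{(6)}=0$ says $\gamma_{6}(G)G'^{5}\subseteq D_{(7),K}(G)=\gamma_{7}(G)\gamma_{3}(G)^{5}G'^{25}$, while the conclusion $d_{(11)}=0$ requires in particular $\gamma_{3}(G)^{5}\subseteq D_{(12),K}(G)=\gamma_{12}(G)\gamma_{4}(G)^{5}G'^{25}$; starting from $\gamma_{3}(G)^{5}\subseteq G'^{5}\subseteq D_{(7),K}(G)$, your tower inclusions stall around depth $8$, a factor of $p$ short of $12$. What is needed is the commutator--power exchange $[x,g]^{p}\equiv[x^{p},g]$ modulo deeper terms (Hall--Petrescu, i.e.\ the compatibility of $\pi$ with the bracket in Jennings' restricted graded Lie algebra), fed into an induction over the intermediate weights so that the hypothesis bootstraps itself degree by degree; this is exactly the step you name and omit, so the case $r>q$ (and the generators $\gamma_{1+u}(G)^{p^{k}}$ with $u<m$) is not established.

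Part (1) is closer to complete, but the key claim --- that the hypothesis "kills every bracket contribution" to $\mathcal L_{pm+1}$ --- is asserted, not proved, and the appeal to $\pi^{l}|_{B}=0$ is a red herring there. The missing piece is short and you should supply it: $d_{(i+1)}=0$ for some $i<pm$ gives $\gamma_{i+1}(G)\subseteq D_{(i+2),K}(G)$, and iterating $[D_{(w),K}(G),G]\subseteq D_{(w+1),K}(G)$ pushes $\gamma_{pm+1}(G)$ into $D_{(pm+2),K}(G)$; since every other generator of weight $pm$ in Passi's formula, namely $\gamma_{i'}(G)^{p^{j'}}$ with $(i'-1)p^{j'}=pm$ and $j'\ge 1$, is the $p$-th power of a weight-$m$ generator, the quotient $D_{(pm+1),K}(G)/D_{(pm+2),K}(G)$ is covered by the image of your map $\pi$ on $D_{(m+1),K}(G)/D_{(m+2),K}(G)$, whence $d_{(pm+1)}\le d_{(m+1)}$. (Incidentally, the statement as printed overloads the letter $l$, which also denotes $\exp(G')=p^{l}$; your implicit reading --- any index below $pm$ with vanishing $d$ --- is the correct one.) So part (1) can be finished along your lines with one easy lemma, but part (2) as written has a genuine hole at its centre.
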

 \begin{theorem}
 	Let $G$ be a group and $K$ be a field of characteristics $p>0$ such that $KG$ is Lie nilpotent. Then $t^{L}(KG) = 10p-8$ if and only if one of the following condition  satisfied:
 	\begin{enumerate}
 		
 	\item  $G'\cong C_{7^{2}}\times (C_{7})^{2}$ and $\gamma_{3}(G) \subseteq G'^{7}$; 
 	\item $ G'\cong C_{7^{2}}\times C_{7}$, $\gamma_{3}(G)\cong C_{7}$  and $|\gamma_{3}(G)\cap G'^{7}| = 1$;
 	
 	\item  $G'\cong C_{7^{2}}\times C_{7}$, $\gamma_{4}(G) \subseteq G'^{7}\subseteq \gamma_{3}(G)\cong (C_{7})^{2}$ and $\gamma_{5}(G) = 1$;
 	
 	\item $G'\cong C_{5^{2}}\times  (C_{5})^{4} $, $G'^{5}\subseteq \gamma_{3}(G)$ and $\gamma_{4}(G) = 1$;
 	\item $G'\cong (C_{5})^{6}$, $|G'^{5}\cap \gamma_{3}(G)| =1$, $\gamma_{3}(G)\cong C_{5}$ and $\gamma_{4}(G) = 1$;
 	\item $G'\cong  (C_{5^2})^{2}  \times C_{5}$ and  $\gamma_{3}(G)\subseteq G'^{2}$;
 	\item $G'\cong C_{5^2}\times (C_{5})^{3}$, either $|G'^{5}\cap \gamma_{3}(G)| = 1$, $\gamma_{3}(G)\cong C_{5}$  or $G'^{5}\subseteq \gamma_{3}(G)\cong (C_{5})^{2}$;
 	\item $G'\cong (C_{5})^{5}$,  $|G'^{5}\cap \gamma_{3}(G)| = 1$, $\gamma_{3}(G)\cong (C_{5})^{2}$ and $\gamma_{4}(G)= 1$;
 	
 	\item $G'$ is  one of the groups  $S(3125,2)$, $S(3125,40)$, $S(3125,41)$, $S(3125,42)$, $S(3125,43)$, $S(3125,44)$, $S(3125,73)$ or $S(3125,74)$, $G'^{5}\subseteq \zeta(G')$, $G''\subseteq \zeta(G')$, $G'^{5}\subseteq \gamma_{3}(G)\cong (C_{5})^{2}$, $\gamma_{4}(G) \cong C_{5}$ and  $\gamma_{5}(G)= 1$;
 	\item  $G'\cong  C_{5^2}\times (C_{5})^{2}$, either $|G'^{5}\cap \gamma_{3}(G)| = 1$, $\gamma_{3}(G)\cong (C_{5})^{2}$ or $G'^{5}=  \gamma_{3}(G)\cong C_{5}$ or $G'^{5} \subseteq  \gamma_{3}(G)\cong (C_{5})^{3}$;
 	\item $G'\cong C_{8}\times (C_{2})^{3}$, $\gamma_{3}(G)\subseteq G'^{2}$, $\gamma_{3}(G)\cong C_{4}$ and $\gamma_{4}(G)= 1$;
	\item $G'\cong (C_{4})^{2} \times (C_{2})^{2}$, $\gamma_{3}(G)\subseteq G'^{2}$ and  $\gamma_{4}(G)= 1$;
 	\item  $G'\cong C_{4}\times (C_{2})^{4}$, $G'^{2}\subseteq \gamma_{3}(G)\cong C_{4}$ and $\gamma_{4}(G) =1$;
    \item $G'$ is   one of the groups  $S(64,199)$ to $S(64,201)$ or $S(64,215)$ to $S(64,245)$, $\gamma_{3}(G)\subseteq G'^{2}$ and $\gamma_{4}(G)\cong C_{2}$;
    \item $G'$ is  one of the groups  $S(64,264)$ or $S(64,265)$, either $G'^{2}\subseteq \gamma_{3}(G)\cong C_{4}$ or $|G'^{2}\cap \gamma_{3}(G)| = 1$, $\gamma_{3}(G)\cong C_{2}$;
    \item $G'$ is  one of the groups  $S(64,247)$ or $S(64,248)$, $G'^{2} = \gamma_{3}(G) \cong C_{4}$, $\gamma_{4}(G)\cong C_{2}$ and $\gamma_{5}(G) = 1$;
    \item $G' \cong S(64,263)$, $|G'^{2}\cap \gamma_{3}(G)| = 2$, $\gamma_{3}(G)\cong C_{4}$, $\gamma_{4}(G)\cong C_{2}$ and $\gamma_{5}(G) = 1$;
    
 	\item $G'\cong C_{8}\times C_{4}$, either $G'^{2}\subseteq \gamma_{3}(G) \cong C_{4}\times C_{2}$ or $\gamma_{3}(G)\subseteq G'^{2}$, $\gamma_{3}(G)\cong C_{4}$;
 	\item  $G'\cong C_{8}\times (C_{2})^{2}$  and $G'^{2}\subseteq \gamma_{3}(G) \cong C_{4}\times C_{2}$;
 	\item $G'\cong (C_{4})^{2} \times C_{2}$, either $|G'^{2}\cap \gamma_{3}(G)| = 2$, $\gamma_{3}(G) \cong C_{4}$ or $|G'^{2}\cap \gamma_{3}(G)| = 4$, $\gamma_{3}(G) \cong C_{4}\times C_{2}$;
 	\item $G'\cong C_{4}\times (C_{2})^{3}$, $|G'^{2}\cap \gamma_{3}(G)| = 2$ and $\gamma_{3}(G)\cong C_{4}\times C_{2}$;
 	\item $G'$ is   one of the groups  $S(32,4)$, $S(32,5)$ or $S(32,12)$, $\gamma_{3}(G)\subseteq G'^{2}\cong C_{4}\times C_{2}$, $\gamma_{4}(G)\subseteq G'^{4}\gamma_{3}(G)^{2} \cong C_{2}$ and $\gamma_{5}(G) = 1$;
 	\item $G'$ is   one of the groups  $S(32,22)$ to $S(32,26)$,  either $|G'^{2}\cap \gamma_{3}(G)| = 2$, $\gamma_{3}(G)\cong C_{4}$ or $G'^{2}\subseteq \gamma_{3}(G) \cong C_{4}\times C_{2}$, $\gamma_{4}(G)\subseteq G'^{4}\gamma_{3}(G)^{2}\cong C_{2}$, $\gamma_{5}(G) = 1$;
 	\item $G'$ is   one of the groups  $S(32,37)$ or $S(32,38)$, either $|G'^{2}\cap \gamma_{3}(G)| = 2$, $\gamma_{3}(G)\cong (C_{2})^{2}$ or $G'^{2}\subseteq \gamma_{3}(G)\cong C_{4}\times C_{2}$, $\gamma_{4}(G)\subseteq G'^{4}\gamma_{3}(G)^{2}\cong C_{2}$, $\gamma_{5}(G) = 1$;
 	\item $G'$ is   one of the groups  $S(32,46)$ to $S(32,48)$, either $|G'^{2}\cap \gamma_{3}(G)| = 1$, $\gamma_{3}(G) \cong C_{4}$ or $G'^{2}\subseteq \gamma_{3}(G)\cong C_{4}\times C_{2}$, $\gamma_{4}(G)\subseteq G'^{4}\gamma_{3}(G)^{2}\cong C_{2}$, $\gamma_{5}(G) = 1$;
 	
 	\item  $G'\cong (C_{p})^{4}$,   $|G'^{p}\cap \gamma_{3}(G)| = 1$, $\gamma_{3}(G)\cong (C_{p})^{3}$,  $\gamma_{4}(G)\cong (C_{p})^{2}$ and $\gamma_{5}(G) \cong C_{p}$ for $p\geq 5$;
 	\item  $G'\cong C_{9}\times (C_{3})^{2}$, either $|G'^{2}\cap \gamma_{3}(G)| = 1$, $\gamma_{3}(G)\cong (C_{3})^{2}$ or $G'^{3}\subseteq \gamma_{3}(G) \cong (C_{3})^{3}$;
 	\item $G'\cong (C_{3})^{4}$,   $|G'^{3}\cap \gamma_{3}(G)| = 1$ and $\gamma_{3}(G)\cong (C_{3})^{3}$;
 	\item  $G'\cong  C_{8}\times C_{2}$,  either $\gamma_{3}(G)\cong C_{2}$, $|G'^{2}\cap \gamma_{3}(G)| = 1$ or $G'^{2}\subseteq \gamma_{3}(G) \cong C_{4}\times C_{2}$;
 	\item $G'\cong C_{4}\times (C_{2})^{2}$ and  $G'^{2}\subseteq \gamma_{3}(G)\cong C_{4}\times C_{2}$;
 	\item  $G'\cong  ((C_{p}\times C_{p})\rtimes C_{p})\times C_{p}$, $\gamma_{3}(G)\cong (C_{p})^{3}$, $\gamma_{4}(G)\cong (C_{p})^{2}$, $\gamma_{5}(G)\cong C_{p}$ and $\gamma_{6}(G) = 1$ for $p\geq5$;
 	\item $G'\cong (C_{9})^{2}$ and $G'^{3}\subseteq \gamma_{3}(G) \cong (C_{3})^{3}$;
 	\item $G'\cong C_{9}\times (C_{3})^{2}$,  either $|G'^{3}\cap \gamma_{3}(G)| = 1$, $\gamma_{3}(G) \cong (C_{3})^{2}$ or $G'^{3}\subseteq \gamma_{3}(G) \cong (C_{3})^{3} $;
 	\item  $G'\cong  (C_{3})^{4}$, $|G'^{3}\cap \gamma_{3}(G)| = 1$ and $\gamma_{3}(G) \cong (C_{3})^{3} $;
 	\item $G'\cong (C_{p})^{5}$, $\gamma_{3}(G) \cong (C_{p})^{3}$ and  $|G'^{p} \cap \gamma_{3}(G)| = 1$  for $p\geq 5$;
 	\item $G'\cong (C_{9})^{2} \times C_{3}$ and $G'^{3}\subseteq \gamma_{3}(G)\cong (C_{3})^{3}$;
 	\item $G'\cong C_{9}\times (C_{3})^{3}$, either $|G'^{3}\cap \gamma_{3}(G)| = 1$, $\gamma_{3}(G)\cong (C_{3})^{2}$ or $G'^{3}\subseteq  \gamma_{3}(G) \cong (C_{3})^{3}$;
 	\item $G'\cong (C_{3})^{5}$, $|G'^{3}\cap \gamma_{3}(G)| = 1$ and $\gamma_{3}(G) \cong (C_{3})^{3}$;
 	\item  $G'\cong  (C_{4})^{2}\times C_{2}$,  either $G'^{2}\subseteq \gamma_{3}(G)\cong (C_{2})^{3}$ or $|G'^{2}\cap \gamma_{3}(G)| = 2$, $\gamma_{3}(G) \cong (C_{2})^{2} $ or $|G'^{2}\cap \gamma_{3}(G)| = 1$, $\gamma_{3}(G)\cong C_{2}$;
 	\item $G'\cong C_{4}\times (C_{2})^{3}$,  either $G'^{2}\subseteq \gamma_{3}(G)\cong (C_{2})^{3}$ or $|G'^{2}\cap \gamma_{3}(G)| = 1$, $\gamma_{3}(G)\cong (C_{2})^{2} $;
 	\item $G'\cong (C_{2})^{5}$,  $|G'^{2}\cap \gamma_{3}(G)| = 1$ and $\gamma_{3}(G)\cong (C_{2})^{3}$;
 	\item $G'\cong S(32,2)$, $\gamma_{3}(G)\subseteq G'^{2}$, $\gamma_{4}(G)\cong C_{2}$ and $\gamma_{5}(G) = 1$;
 	\item $G'$ is   one of the groups  $S(32,22)$ to $S(32,26)$, $\gamma_{4}(G)\cong C_{2}$, $\gamma_{5}(G) = 1$,  $|G'^{2}\cap \gamma_{3}(G)| = 2$ and  $\gamma_{3}(G)\cong (C_{2})^{2}$;
 	\item $G'$ is  one of the groups  $S(32,46)$ to  $S(32,48)$, $\gamma_{4}(G)\cong C_{2}$, $\gamma_{5}(G) = 1$, either $|G'^{2}\cap \gamma_{3}(G)| = 1$, $\gamma_{3}(G)\cong (C_{2})^{2}$ or $G'^{2}\subseteq \gamma_{3}(G)\cong (C_{3})^{3}$;
 
    \item 	$G'$ is  one of the groups  $S(243,2)$, $S(243,33)$, $S(243,34)$ or $S(243,36)$,  either $G'^{3}\subseteq \gamma_{3}(G)\cong (C_{3})^{3}$ or  $|G'^{3}\cap \gamma_{3}(G)| = 3$, $\gamma_{3}(G)\cong (C_{3})^{2}$, $\gamma_{4}(G)\cong C_{3}$, $\gamma_{5}(G) = 1$;
 
 	\item  $G'\cong \left< a,b, c, d, e \right>  = \left<c,d \right>\times \left<a,b\right>$, where $\left<c,d\right> \cong C_{p}\times C_{p}$ and \\ $\left<a,b,e| = a^{p} = b^{p}= e^{p}= 1, [b,a] = e\right> $ is abelian group of order $p^{3}$ and exponent $p$, $\gamma_{3}(G)\cong (C_{p})^{3}$, $\gamma_{4}(G)\cong C_{p}$ and $\gamma_{5}(G) = 1$ for $p\geq5$;
 	\item   $G'\cong  (C_{9})^{2}\times (C_{3})^{2}$,  $\gamma_{3}(G)\subseteq G'^{3}$ and $\gamma_{4}(G) = 1$; 
 	\item  $G'\cong C_{9}\times (C_{3})^{4}$, either  $\gamma_{3}(G)\cong C_{3}$,  $\gamma_{4}(G) = 1$, $|G'^{3}\cap \gamma_{3}(G)| = 1$ or $G'^{3} \subseteq \gamma_{3}(G)\cong (C_{3})^{2}$, $\gamma_{4}(G) = 1$; 
 	\item $G'$ is   one of the groups  $S(729, 422)$ or $S(729,502)$, $G'^{3}\subseteq \gamma_{3}(G)\cong (C_{3})^{2}$, $|G'^{3}\cap \gamma_{4}(G)| = 1$, $\gamma_{4}(G)\cong C_{3}$ and $\gamma_{5}(G) = 1$;
 	\item $G'$ is   one of the groups  $S(729,423)$ or $S(729,424)$, $G'^{3}\subseteq \gamma_{3}(G) \cong (C_{3})^{2}$, $\gamma_{4}(G)\cong C_{3}$ and $\gamma_{5}(G) = 1$;
 	\item $G'$ is  one of the groups  $S(729,103)$, $S(729,105)$, $S(729,417)$, $S(729,418)$, $S(729,420)$ or $S(729,421)$,  $G'^{3} = \gamma_{3}(G)\cong (C_{3})^{2}$, $\gamma_{4}(G)\cong C_{3}$ and $\gamma_{5}(G) = 1$;
 	\item  $G'$ is   one of the groups   $S(729,416)$, $S(729,419)$, $S(729,499)$ or $S(729,500)$,  $G'^{3}\subseteq \gamma_{3}(G)\cong (C_{3})^{2}$, $\gamma_{4}(G)\cong C_{3}$ and $\gamma_{5}(G) = 1$;
 	
 	\item $G'\cong C_{9}\times (C_{3})^{6}$, $\gamma_{3}(G)\subseteq G'^{3}\cong C_{3}$ and $\gamma_{4}(G) = 1$;
 	\item $G'\cong  (C_{4})^{2}\times (C_{2})^{3}$  and   $\gamma_{3}(G)\cong G'^{2}$;
 	\item $G'\cong C_{4}\times (C_{2})^{5}$, either $|G'^{2}\cap \gamma_{3}(G)| = 1$, $\gamma_{3}(G)\cong C_{2}$ or $G'^{2}\subseteq \gamma_{3}(G)\cong (C_{2})^{2}$;
 	\item $G'\cong C_{9}\times (C_{3})^{5}$, either $\gamma_{3}(G)\cong C_{3}$, $|G'^{3}\cap \gamma_{3}(G)| = 1$ or $G'^{3}\subseteq \gamma_{3}(G)\cong (C_{3})^{2}$;
 	\item  $G'\cong (C_{p})^{7}$, $|G'^{p}\cap \gamma_{3}(G)| = 1$ and  $\gamma_{3}(G)\cong (C_{p})^{2}$  for $p\geq5$;
 	\item $G'$ is  one of the groups   $S(128,2157)$ to $S(128,2162)$  \sloppy or $S(128,2304)$, $G'^{2}\subseteq \gamma_{3}(G)\cong (C_{2})^{2}$, $\gamma_{4}(G)\cong C_{2}$ and $\gamma_{5}(G) = 1$;
 	\item $G'$ is   one of the groups  $S(128,2323)$ to $S(128,2325)$, $|G'^{2}\cap \gamma_{3}(G) | = 2$, $\gamma_{3}(G)\cong (C_{2})^{2}$, $\gamma_{4}(G)\cong C_{2}$ and $\gamma_{5}(G) = 1$;
 	\item $G'$ is  one of the groups  $S(128,2151)$ to $S(128,2156)$, $S(128,2302)$ or $S(128,2303)$, $G'^{2} = \gamma_{3}(G)\cong (C_{2})^{2}$, $\gamma_{4}(G)\cong C_{2}$ and $\gamma_{5}(G)= 1$;
 	\item $G'$ is   one of the groups $S(128,2320)$ to $S(128,2322)$, $G'^{2}\subseteq \gamma_{3}(G)\cong (C_{2})^{2}$, $\gamma_{4}(G)\cong C_{2}$ and $\gamma_{5}(G) = 1$; 
 	\item  $G'$ is   one of the groups  $S(2187,5874)$, $S(2187,5876)$, \sloppy $S(2187,9102)$ to  $S(2187,9105)$,   $G'^{3} = \gamma_{3}(G)\cong (C_{3})^{2}$, $\gamma_{4}(G)\cong C_{3}$ and $\gamma_{5}(G) = 1$;
 	\item  $G'$ is  one of the groups  $S(2187,9100)$,   $S(2187,9101)$, $S(2187,9306)$ or $S(2187,9307)$, $G'^{3}\subseteq \gamma_{3}(G)\cong (C_{3})^{2}$,  $\gamma_{4}(G)\cong C_{3}$ and $\gamma_{5}(G) = 1$;
 	\item   $G'$  is  one of the groups  $S(2187,5867)$,  $S(2187,5870)$, $S(2187,5872)$ or  $S(2187,9096)$ to $S(2187,9099)$, $G'^3 = \gamma_{3}(G)\cong (C_{3})^{2}$, $\gamma_{4}(G)\cong C_{3}$ and $\gamma_{5}(G) = 1$;
 	\item  $G'$ is  one of the groups  $S(2187,9094)$, $S(2187,9095)$, $S(2187,9303)$ or $S(2187,9304)$,  $G'^{3}\subseteq \gamma_{3}(G)\cong ( C_{3})^{2}$, $\gamma_{4}(G)\cong C_{3}$  and $\gamma_{5}(G) = 1$;

 	\item   $G'\cong <a,b,c,d,e,f,g: a^p = b^p = c^p = d^p =e^p= f^p = g^p = 1,  [b,a] =  c>$,\\ $\gamma_{3}(G)\cong (C_{p})^{2}$, $\gamma_{4}(G)\cong C_{p}$ and $\gamma_{5}(G) = 1$ for $p\geq 3$;
 	 \item $G'\cong <a,b,c,d,e,f,g: a^p = b^p = c^p = d^p =e^p= f^p = g^p = 1, [b,a]= e, [d,c] =e>$, $\gamma_{3}(G)\cong (C_{p})^{2}$, $\gamma_{4}(G)\cong C_{p}$ and $\gamma_{5}(G) = 1$ for $ p\geq 3$;

 	\item $G'\cong  (C_{4})^{3}$,   $\gamma_{3}(G)\subseteq G'^{2}$ and $\gamma_{4}(G) = 1$;
 	\item $G'\cong (C_{4})^{2}\times (C_{2})^{2}$,  either $|G'^{2}\cap \gamma_{3}(G)| =1 $, $\gamma_{3}(G)\cong C_{2}$ or $\gamma_{3}(G)\cong (C_{2})^{2}$ or $G'^{2}\subseteq \gamma_{3}(G)\cong (C_{2})^{3}$;
 	\item $G'\cong (C_{2})^{6}$,  $|G'^{2}\cap \gamma_{3}(G)| = 1$, $\gamma_{3}(G)\cong (C_{2})^3$ and $\gamma_{4}(G) = 1$;
 	\item $G'\cong C_{9}\times( C_{3})^{4}$, either $\gamma_{3}(G)\cong (C_{3})^{2}$, $|G'^{2}\cap \gamma_{3}(G)|=  1$ or  $G'^{3}\subseteq \gamma_{3}(G)\cong (C_{3})^{3}$;
 	\item  $G'\cong (C_{p})^{6}$, $|G'^{p}\cap\gamma_{3}(G)| = 1$, $\gamma_{3}(G)\cong( C_{p})^{3}$, $\gamma_{4}(G)\cong C_{p}$ and  $\gamma_{5}(G) = 1$  for  $p\geq5$;
 	\item $G'$ is   one of the groups  $S(64,199)$ to $S(64,201)$,  $|G'^{2}\cap \gamma_{3}(G)| = 2$, $\gamma_{3}(G)\cong (C_{2})^{2}$, $\gamma_{4}(G)\cong C_{2}$ and $\gamma_{5}(G) = 1$;
 	\item $G'$ is one of the groups  $S(64,264)$ or $S(64,265)$, $|G'^{2}\cap \gamma_{3}(G) |= 1$, $\gamma_{3}(G)\cong (C_{2})^{2}$, $\gamma_{4}(G)\cong C_{2}$ and $\gamma_{5}(G) = 1$;
 	\item $G'$  is  one of the groups   $S(64,56)$ to $S(64,59)$,  $\gamma_{3}(G)\subseteq G'^{2}$, $\gamma_{3}(G)\cong (C_{2})^{2}$,  $\gamma_{4}(G)\cong C_{2}$ and $\gamma_{5}(G) = 1$;
 	
 	\item $G'$ is  one of the groups $S(64,193)$ to $S(64,198)$, $|G'^{2}\cap \gamma_{3}(G)| = 2$, $\gamma_{3}(G)\cong (C_{2})^{2}$, $\gamma_{4}(G)\cong C_{2}$ and $\gamma_{5}(G) = 1$;
 	\item  $G'$ is  one of the groups   $S(64,56)$ to $S(64,59)$, $\gamma_{3}(G)\subseteq G'^{2}$, $\gamma_{3}(G)\cong (C_{2})^{3}$, $\gamma_{4}(G)\cong C_{2}$ and $\gamma_{5}(G) = 1$;
 	\item    $G'$ is  one of the groups  $S(64,193)$ to $S(64,198)$  and $G'^{2}\subseteq \gamma_{3}(G)\cong (C_{2})^{3}$; 
 	\item  $G'$ is  one of the groups  $S(729,103)$ to $S(729, 106)$, $S(729, 416)$ to $S(729,420)$, $S(729, 499)$  or $S(729,500)$, $G'^{3}\subseteq \gamma_{3}(G) \cong (C_{3})^{3}$, $\gamma_{4}(G)\cong C_{3}$ and $\gamma_{5}(G) = 1$;
 	\item  $G'$ is   one of the groups  $S(729,103)$, $S(729, 105)$, $S(729, 417)$, $S(729, 418)$, $S(729,420)$ or $S(729, 421)$, $|G'^{3}\cap \gamma_{3}(G)|= 3$, $\gamma_{3}(G)\cong (C_{3})^{2}$, $\gamma_{4}(G)\cong C_{3}$ and $\gamma_{5}(G)= 1$;
 	\item $G'$ is   one of the groups  $S(729,104)$ or $S(729,106)$,  $ \gamma_{3}(G) \subseteq G'^{2}$, $\gamma_{3}(G)\cong (C_{3})^{2}$, $\gamma_{4}(G)\cong C_{3}$ and $\gamma_{5}(G) = 1$;
 	\item  $G'$ is one of the groups   $S(729,416)$, $S(729,419)$, $S(729,499)$ or $S(729,500)$,  $|G'^{3}\cap \gamma_{3}(G)|= 1$, $\gamma_{3}(G)\cong (C_{3})^{2}$, $\gamma_{4}(G)\cong C_{3}$ and $\gamma_{5}(G) = 1$;
 	\item $G'\cong \phi_{2}(1^5)\times (1)$, $\gamma_{3}(G)\cong (C_{p})^{3}$, $|G'^{p}\cap \gamma_{3}(G)| = 1$,  $\gamma_{4}(G)\cong C_{p}$  and   $\gamma_{5}(G)= 1$  for $p\geq5$; 
 	\item $G'\cong  (C_{4})^{2}\times C_{2}$,  either $|G'^{2}\cap \gamma_{3}(G)| = 1$, $\gamma_{3}(G)\cong (C_{2})^{2}$  or $|G'^{2}\cap \gamma_{3}(G)| = 2$, $\gamma_{3}(G)\cong (C_{2})^{3} $ or $G'^{2}\subseteq \gamma_{3}(G)\cong (C_{2})^{4}$;
 	\item $G'\cong C_{4}\times (C_{2})^{3}$, either $|G'^{2}\cap \gamma_{3}(G)| = 1$, $\gamma_{3}(G) \cong (C_{2})^{3}$ or $G'^{2}\subseteq \gamma_{3}(G)\cong (C_{2})^{4}$;
 	\item $G'\cong (C_{2})^{4}$,  $|G'^{2}\cap \gamma_{3}(G)| = 1$ and $\gamma_{3}(G)\cong (C_{2})^{4}$;
 	\item $G'\cong C_{9}\times (C_{3})^{3}$, either $\gamma_{3}(G)\cong (C_{3})^{3}$, $|G'^{3}\cap \gamma_{3}(G)| = 1$ or $\gamma_{3}(G)\cong (C_{3})^{4}$, $G'^{3}\subseteq \gamma_{3}(G)$;
 	\item $G'\cong (C_{p})^{5}$, $\gamma_{3}(G)\cong (C_{p})^{4}$ and  $|G'^{p}\cap \gamma_{3}(G)| = 1$  for $p\geq5$;
 	\item $G'\cong S(32,2)$,  $|G'^{2}\cap \gamma_{3}(G)| =  4$, $\gamma_{3}(G)\cong (C_{2})^{3}$, $\gamma_{4}(G)\cong C_{2}$ and $\gamma_{5}(G) = 1$;
 	\item $G'\cong  S(243,32)$, $|G'^{3}\cap \gamma_{3}(G)| = 1$, $\gamma_{3}(G)\cong (C_{3})^{3}$ and $\gamma_{4}(G)\cong C_{3}$;
 	\item $G'\cong (C_{p})^{10}$, $\gamma_{3}(G)=  1$ and $|G'^{3}\cap \gamma_{4}(G)| = 1$  for $p>0$;
 	\item $G'\cong (C_{p})^{9}$,  $\gamma_{3}(G)\cong C_{p}$, $|G'^{p}\cap \gamma_{3}(G)| = 1$ and $\gamma_{4}(G)= 1$ for  $p\geq3$; 
 	\item $G'\cong C_{4}\times (C_{2})^{7}$,  $\gamma_{3}(G)\subseteq G'^{2}\cong C_{2}$ and $\gamma_{4}(G)= 1$;
 	\item   $G'\cong (C_{2})^{9}$,  $\gamma_{3}(G)\cong C_{2}$, $|G'^{2}\cap \gamma_{3}(G)| = 1$ and $\gamma_{4}(G)= 1$;
 	\item  $G'\cong (C_{p})^{8}$,  $\gamma_{3}(G)\cong (C_{p})^{2} $,  $|G'^{p}\cap \gamma_{3}(G)| = 1$ and  $\gamma_{4}(G)= 1$ for $p\geq3$;
 	\item  $G'\cong (C_{2})^{8}$,  $|G'^{2}\cap \gamma_{3}(G)| = 1$, $\gamma_{3}(G)\cong (C_{2})^{2}$ and $\gamma_{4}(G)= 1$  for $p\geq5$;
 	\item $G'\cong C_{4}\times (C_{2})^{6}$, either $|G'^{2}\cap \gamma_{3}(G)| = 1$, $\gamma_{3}(G)\cong C_{2}$ or $G'^{2}\subseteq \gamma_{3}(G)\cong (C_{2})^{2}$  for  $p\geq5$;
 	\item $G'\cong (C_{4})^{2}\times (C_{2})^{4}$,  $\gamma_{3}(G)\subseteq G'^{2}$ and $\gamma_{4}(G)= 1$;
 	\item $G'\cong (C_{p})^{7}$,  $\gamma_{3}(G)\cong C_{p}\times C_{p}\times C_{p}$ and $|G'^{p}\cap \gamma_{3}(G)| = 1$  for $p\geq3$;
 	\item $G'\cong (C_{4})^{3}\times C_{2}$,  $\gamma_{3}(G)\subseteq G'^{2}$ and $\gamma_{4}(G)= 1$;
 	\item $G'\cong  (C_{4})^{2}\times (C_{2})^{3}$, either $|G'^{2}\cap \gamma_{3}(G)| = 1$, $\gamma_{3}(G)\cong C_{2}$ or $|G'^{2}\cap \gamma_{3}(G)| = 2$, $\gamma_{3}(G)\cong C_{2}\times C_{2}$ or $G'^{2}\subseteq \gamma_{3}(G)\cong (C_{2})^{3}$, $\gamma_{4}(G)= 1$;
 	\item $G'\cong C_{4}\times (C_{2})^{5}$, either $|G'^{2}\cap \gamma_{3}(G)| = 1$, $\gamma_{3}(G)\cong (C_{2})^{2}$ or $G'^{2}\subseteq \gamma_{3}(G)\cong (C_{2})^{3}$;
 	\item $G'\cong (C_{2})^{7}$,  $|G'^{2}\cap \gamma_{3}(G)| = 1$, $\gamma_{3}(G)\cong (C_{2})^{3}$ and $\gamma_{4}(G)= 1$;
 	\item $G'\cong (C_{p})^{6}$, $|G'^{p}\cap \gamma_{3}(G)| = 1$ and $\gamma_{3}(G)\cong (C_{p})^{4}$ for $p\geq3$;
 	\item $G'\cong (C_{4})^{3}$,  $\gamma_{3}(G)\subseteq G'^{2}$ and $\gamma_{4}(G)= 1$;
 	\item $G'\cong (C_{4})^{2}\times (C_{2})^{2}$, either  $|G'^{2}\cap \gamma_{3}(G)| = 1$, $\gamma_{3}(G)\cong (C_{2})^{2}$ or $|G'^{2}\cap \gamma_{3}(G)| = 2$, $\gamma_{3}(G)\cong (C_{2})^{3}$ or $G'^{2}\subseteq \gamma_{3}(G)\cong (C_{2})^{4}$, $\gamma_{4}(G)= 1$;
 	\item $G'\cong C_{4}\times (C_{2})^{4}$, either  $|G'^{2}\cap \gamma_{3}(G)| = 1$, $\gamma_{3}(G)\cong (C_{2})^{3}$ or $G'^{2}\subseteq \gamma_{3}(G) \cong (C_{2})^{4}$, $\gamma_{4}(G)= 1$;
 	\item $G'\cong (C_{2})^{6}$,  $|G'^{2}\cap \gamma_{3}(G)| = 1$, $\gamma_{3}(G)\cong (C_{2})^{4}$ and $\gamma_{4}(G)= 1$.
 \end{enumerate}

 \end{theorem}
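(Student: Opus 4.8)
The plan is to run Jennings' theory backwards. Suppose $KG$ is Lie nilpotent with $|G'| = p^{n}$. By the formula quoted in the introduction, $t^{L}(KG) = 2 + (p-1)\sum_{m\geq 1} m\,d_{(m+1)}$, so $t^{L}(KG) = 10p-8$ holds if and only if $(p-1)\sum_{m\geq 1}m\,d_{(m+1)} = 10(p-1)$, i.e.\ (re-indexing $k = m+1$) if and only if
$$\sum_{k\geq 2}(k-1)\,d_{(k)} = 10, \qquad \text{while always}\quad \sum_{k\geq 2} d_{(k)} = n.$$
So the first step is purely combinatorial: list every non-negative integer sequence $(d_{(2)}, d_{(3)}, d_{(4)}, \dots)$ satisfying the left equation --- equivalently, every way of writing $10$ as a sum of parts $k-1$ taken with multiplicity $d_{(k)}$ --- and record the associated $n$. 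The cited Lemma of Shalev then prunes the list: its first part forbids a sequence with $d_{(l+1)} = 0$ for some $l < pm$ together with $d_{(pm+1)} > d_{(m+1)}$, and its second part propagates the vanishing of a $d_{(m+1)}$ to all larger indices with the same $p'$-part. What survives is a finite collection of admissible \emph{dimension patterns}, and how many there are depends on $p$, since for small $p$ the inequalities $(i-1)p^{j}\geq m-1$ governing the dimension subgroups collapse in more ways.

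For each surviving pattern I would then turn the numerical data into structural data on $G$ using Passi's identity $D_{(m),K}(G) = \prod_{(i-1)p^{j}\geq m-1}\gamma_{i}(G)^{p^{j}}$ together with $p^{d_{(m)}} = |D_{(m),K}(G) : D_{(m+1),K}(G)|$. Knowing all the $d_{(m)}$ fixes the orders of the consecutive factors of the chain of Lie dimension subgroups, and expanding the products $\prod\gamma_{i}(G)^{p^{j}}$ pins down --- usually up to finitely many options --- the isomorphism type of $G'$, its exponent, the orders of $\gamma_{3}(G),\gamma_{4}(G),\dots$ and of the intersections $G'^{p^{j}}\cap\gamma_{i}(G)$; these are exactly the invariants recorded in the theorem. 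For the ``generic'' patterns this yields the families indexed by $p\geq 3$ or $p\geq 5$ (the elementary abelian cases $(C_{p})^{k}$, and the $p$-group families such as $\phi_{2}(1^{5})$, $((C_{p}\times C_{p})\rtimes C_{p})\times C_{p}$, and the presented groups appearing in the list), while the patterns realizable only for $p\in\{2,3,5,7\}$ reduce the problem to a finite search.

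The main obstacle, and the part that accounts for the length of the statement, is realizability: the dimension-subgroup relations are necessary conditions, not sufficient, so for every candidate list of invariants one must decide whether an actual group attains it, and if so enumerate all isomorphism types of $G'$. For the small-prime patterns this is carried out by going through the pertinent entries of the Small Group Library in GAP \cite{gapg} --- the relevant orders turn out to be $32,64,128,243,729,2187,3125$ and a handful of others --- computing for each group its lower central series, the power subgroups $G'^{p^{j}}$ and the resulting $d_{(m)}$, and retaining precisely those with $\sum_{k\geq 2}(k-1)d_{(k)} = 10$; this is the source of the lists of $S(n,m)$ labels. Finally, the converse direction is the same machinery in reverse: for each item in the statement one computes the $d_{(m)}$ from the stated structure, checks $\sum_{k\geq 2}(k-1)d_{(k)} = 10$, and invokes Jennings' formula to conclude $t^{L}(KG) = 10p-8$. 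Throughout one must take care that the arguments remain valid at $p = 2,3$, where $t^{L}(KG)$ and $t_{L}(KG)$ need not coincide; but since Jennings' theory computes $t^{L}$ directly, this is only a matter of not inadvertently appealing to lower-index results that assume $p > 3$.
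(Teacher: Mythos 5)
Your proposal follows essentially the same route as the paper: reduce via Jennings' formula to the Diophantine condition $\sum_{k\geq 2}(k-1)d_{(k)}=10$, prune the resulting dimension patterns with Shalev's lemma, convert each surviving pattern into structural constraints on $G'$ and the lower central series via Passi's identity, and settle realizability for the small primes by searching the relevant orders in the Small Group Library, with the converse obtained by recomputing the $d_{(m)}$'s. The only difference is one of execution rather than method — the paper carries out the full case-by-case enumeration (and leans on prior classifications for the $d_{(8)}\neq 0$ cases), which your outline correctly identifies as the bulk of the work but does not perform.
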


  	\begin{proof}
  		Let $t^{L}(KG) = 10p-8$. Then $l = \frac{t^{L}(KG)-2}{p-1} =  10$. Thus  from  \cite{msbs}, $d_{(11)} = 0$, $d_{(10)} = 0$, $d_{(9)} = 0$ and $d_{(8)} \neq 0$ if and only if $p = 7$, $G'\cong C_{7^{2}}\times (C_{7})^{2}$, $\gamma_{3}(G) \subseteq G'^{7}$ or $ G'\cong C_{7^{2}}\times C_{7}$, $\gamma_{3}(G)\cong C_{7}$, $|\gamma_{3}(G)\cap G'^{7}| = 1$ or $G'\cong C_{7^{2}}\times C_{7}$, $\gamma_{4}(G) \subseteq G'^{7}\subseteq \gamma_{3}(G)\cong (C_{7})^{2}$, $\gamma_{5}(G) = 1$. \\
  		
  		Now if $\mathbf {d_{8} = 0} $,  then $d_{(2)}+2d_{(3)}+3d_{(4)}+4d_{(5)}+5d_{(6)}+6d_{(7)} = 10$. If $d_{7}\neq 0$,  then we have  $d_{(7)} = 1$, $d_{(2)} = 4$ or $d_{(7)} = d_{(3)} = 1$,  $d_{(2)} = 2$ or $d_{(7)} = d_{(2)} = d_{(4)} = 1$ or $d_{(7)} = 1$, $d_{(3)} = 2$ or $d_{(7)} = d_{(5)} =  1$.\\
  		
  		If  $ \mathbf {d_{(7)} = 1} $, then all the above cases are discarded by  Lemma \ref{2l1}. \\
  		
  		Now	if $\mathbf {d_{(7)} = 0} $, then $d_{(2)}+2d_{(3)}+3d_{(4)}+4d_{(5)}+5d_{(6)} = 10$.
  		If $d_{(6)}\neq 0$, then we have the following possibilities: $d_{(6)} =  d_{(2)} = d_{(5)} = 1$ or $d_{(6)} = 1$, $d_{(2)} = 5$ or $d_{(6)} = d_{(3)} = 1$, $d_{(2)} = 3$ or $d_{(6)}  = d_{(4)} = 1$, $d_{(2)} = 2$  or $d_{(6)} =  d_{(2)} = 1$, $d_{(3)} = 2$ or $d_{(6)} = d_{(3)} = d_{(4)} = 1$.\\
  		
  		Let $\mathbf {d_{(6)} =  d_{(2)} = d_{(5)} = 1}$.  Then by Lemma \ref{2l1}(2), $\vartheta_{p'}(4)\geq \vartheta_{p'}(2)$, $\forall p>0$, so $d_{(5)} = 0$. \\
  		
  		Let $\mathbf {d_{(6)} = 1, d_{(2)} = 5}$. If $p\neq 5$, then as   $d_{(2+1)} = 0$, $\vartheta_{p'}(5)\geq \vartheta_{p'}(2)$, hence by Lemma \ref{2l1}(2), $d_{(6)} = 0$.  Now if $p =  5$, then $|G'| = 5^{6}$, $|D_{(6),K}(G)| = |D_{(3),K}(G)| = |D_{(4),K}(G)| = |D_{(5), K}(G)| = 5 $. Therefore, $G'$ is abelian and $|G'^{5}|\leq 5$. We have  $G'\cong C_{5^{2}}\times  (C_{5})^{4} $ or $(C_{5})^{6}$. If $G'\cong C_{5^{2}}\times  (C_{5})^{4} $, then  $G'^{5}\subseteq \gamma_{3}(G)$, $\gamma_{4}(G) = 1$. If $G'\cong (C_{5})^{6}$, then $|G'^{5}\cap \gamma_{3}(G)| = 1$, $\gamma_{3}(G)\cong C_{5}$. \\
  		
  		Let $\mathbf {d_{(6)} = d_{(3)} = 1 , d_{(2)} = 3}$. If $p\neq 5$, then by Lemma \ref{2l1}(2), $d_{(4+1)} = 0$, $\vartheta_{p'}(5)\geq \vartheta_{p'}(4)$, so $d_{(6)} = 0$.  Now if $p =  5$,  then  $|G'| = 5^{5}$, $|D_{(4),K}(G)| = |D_{(5),K}(G)| = |D_{(6),K}(G)| = 5 $, $|D_{(3),K}(G)| = 5^{2}$ and $|G'^{5}| = 5$. Thus   $\gamma_{5}(G) = 1$, $|\gamma_{4}(G)| = 5$ and $|\gamma_{3}(G)| = 5^{2}$. Let $G'$ be  an abelian group, then possible $G'$ are $G'\cong (C_{5^{2}})^{2} \times C_{5}$ or $C_{5^{2}}\times (C_{5})^{3}$ or $(C_{5})^{5}$. If $G'\cong  (C_{5^{2}})^{2}  \times C_{5}$, then $\gamma_{3}(G)\subseteq G'^{2}$. If $G'\cong C_{5^{2}}\times (C_{5})^{3}$, then either $|G'^{5}\cap \gamma_{3}(G)| = 1$, $\gamma_{3}(G)\cong C_{5}$ or $G'^{5}\subseteq \gamma_{3}(G)\cong (C_{5})^{2}$. If $G'\cong (C_{5})^{5}$, then $|G'^{5}\cap \gamma_{3}(G)| = 1$, $\gamma_{3}(G)\cong (C_{5})^{2}$. Now if $G'$ be a non-abelian group, then $G'' = \gamma_{4}(G)\cong C_{5}$, $\gamma_{3}(G)\cong (C_{5})^{2}$ and  $\gamma_{3}(G)\subseteq \zeta(G')$. Thus $|\zeta(G')| = 5^{2}$ or $5^{3}$. If $|\zeta(G')| = 5^{2}$, then $\gamma_{3}(G) = \zeta(G')\cong (C_{5})^{2}$ but from the Table 1,  no such group exists. If $|\zeta(G')| = 5^{3}$, then $\zeta(G')\cong C_{5^{2}}\times C_{5}$ or $(C_{5})^{3}$. Thus possible $G'$ are $S(3125,2)$, $S(3125,40)$, $S(3125,41)$, $S(3125,42)$, $S(3125,43)$, $S(3125,44)$, $S(3125,73)$ and  $S(3125,74)$ and for all these groups $G'^{5}\subseteq \zeta(G')$, $G''\subseteq \zeta(G')$ and $G'^{5}\subseteq \gamma_{3}(G)\cong (C_{5})^{2}$.\\
  		
  		Let  $\mathbf {d_{(6)}  = d_{(4)} = 1, d_{(2)} = 2 } $. Then by Lemma \ref{2l1}, this case is not possible.\\

  		Let  $\mathbf {d_{(6)} =  d_{(2)} = 1, d_{(3)} = 2}$. If $p\neq 5$, then by Lemma \ref{2l1}(2), $d_{(3+1)} = 0$, $\vartheta_{p'}(5)\geq \vartheta_{p'}(3)$, so $d_{(6)} = 0$. If $p = 5$, then $|G'| = 5^{4}$, $|D_{(4), K}(G)| = |D_{(5), K}(G)| = |D_{(6), K}(G)| = 5 $, $|D_{(3), K}(G)| = 5^{3}$ and $|G'^{5}| \leq  5$. Thus $\gamma_{5}(G) = 1$, $|\gamma_{4}(G)| = 5$ and $|\gamma_{3}(G)| = 5^{2}$ or $5^{3}$. Let $G'$ be an abelian group. Then, possible $G'$ are, $(C_{5^2})^{2}$ or $C_{5^2}\times (C_{5})^{2}$. If $G'\cong (C_{5^2})^{2}$, then $|D_{(3),K}(G)| \neq 125$. If $G'\cong  C_{5^2}\times (C_{5})^{2}$, then either $|G'^{5}\cap \gamma_{3}(G)| = 1$, $\gamma_{3}(G)\cong (C_{5})^{2}$ or $G'^{5}=  \gamma_{3}(G)\cong C_{5}$ or $G'^{5} \subseteq  \gamma_{3}(G)\cong (C_{5})^{3}$. Now let $G'$ be a non-abelian group. Then $G'' = \gamma_{4}(G)\cong C_{5}$ and $\gamma_{3}(G)\subseteq \zeta(G')$. If $\gamma_{3}(G)\cong (C_{5})^{2}$, then $\gamma_{3}(G) = \zeta(G')\cong (C_{5})^{2}$, but from the Table 2 of \cite{msbs3}, no such group exists. If $\gamma_{3}(G)\cong (C_{5})^{3}$, then $|\zeta(G')| = 125$.  Thus $G'$ is abelian in this case. \\
  		
    	Let  $\mathbf {d_{(6)} = d_{(3)} = d_{(4)} = 1}$. Since $d_{(1+1)} = 0$, then by Lemma \ref{2l1}(2),  $\vartheta_{p'}(5)\geq \vartheta_{p'}(1)$, so $d_{(6)} = 0$, $\forall p>0$. Thus this case is not possible.\\

  		 Now let $\mathbf {d_{(6)} = 0}$.  If $d_{(5)}\neq 0$,  then we have the following possibilities:   $d_{(3)} = 1$,  $d_{(5)} = 2$ or $d_{(2)} = 2$,   $d_{(5)} = 2$ or $d_{(2)} = 6$,  $d_{(5)} = 1$ or $d_{(4)}= 2$, $d_{(5)}= 1$ or $d_{(2)} = 4$, $d_{(3)} = d_{(5)} = 1$ or  $d_{(2)} = d_{(3)} = 2$,  $d_{(5)} = 1$  or  $d_{(2)} =  d_{(3)} = d_{(4)} =  d_{(5)} = 1$ or $d_{(2)} = 3$,  $d_{(4)} = d_{(5)} = 1$.   \\
  		 
  		 Let  $\mathbf {d_{(3)} = 1, d_{(5)} = 2}$. If $p\neq 2$, then by Lemma \ref{2l1}(2), $\vartheta_{p'}(2)\geq \vartheta_{p'}(1)$. So $d_{(3)} = 0$. If $p = 2$, then  by Lemma \ref{2l1}(1), $1= d_{(3)}\leq d_{(2)} = 0$, so this case  is not possible. \\
  		 
  		 Let  $\mathbf {d_{(2)} = 2, d_{(5)} = 2}$. If $p\neq 2 $, then by Lemma(1.1)(2), $d_{(3+1)} = 0$, $\vartheta_{p'}(4)\geq \vartheta_{p'}(3)$ and thus    $d_{(5)} = 0$. If $p = 2$, then by Lemma \ref{2l1}(1), $d_{(2+1)} = 0$, $d_{(5)} = 0$. Thus this case is not possible.\\
  		 
  		 Let $\mathbf {d_{(2)} = 6, d_{(5)} = 1}$. If $p \neq 2$, then  by Lemma \ref{2l1}(2), $d_{(3)} = 0$,  $\vartheta_{p'}(4)\geq \vartheta_{p'}(3)$ and so $d_{(5)} = 0$. If $p = 2$, then by Lemma \ref{2l1}(1), $d_{(2+1)} = 0$ and so $d_{(5)} = 0$. Thus this case is not possible.\\
  		 
  		 Let $ \mathbf{d_{(4)} = 2, d_{(5)} = 1}$. Then by Lemma \ref{2l1}, this case is not possible.\\
  		 
  		  Let  $\mathbf {d_{(2)} = 4, d_{(3)} =  d_{(5)} = 1}$. If $p \neq 2$, then by Lemma \ref{2l1}(2),  $d_{(3+1)} = 0$, $\vartheta_{p'}(4)\geq \vartheta_{p'}(3)$ and so $d_{(5)} = 0$. If $p = 2$, then $|G'| = 2^6$ $|D_{(3),K}(G)| = 2^2$, $|D_{(5),K}(G)| = |D_{(4),K}(G)| = 2$. Since \sloppy $D_{(6),K}(G) =   G'^{8}\gamma_{3}(G)^{4}\gamma_{4}(G)^{2}\gamma_{6}(G) = 1$, thus $\gamma_{4}(G)\subseteq G'^{4}\gamma_{3}(G)^{2}\cong C_{2}$,  $\gamma_{5}(G) = 1$ and $\gamma_{3}(G)\subseteq \zeta(G')$ and so $G'^{2} \neq 1$. First suppose that   $G'$ is an abelian group. Then  possible $G'$ are $C_{8}\times (C_{2})^{3}$ or $(C_{4})^{2} \times (C_{2})^{2}$ or $C_{4}\times (C_{2})^{4}$. If $G'\cong C_{8}\times (C_{2})^{3}$, then  $\gamma_{3}(G)\subseteq G'^{2}$, $\gamma_{3}(G)\cong C_{4}$. If $G'\cong (C_{4})^{2} \times (C_{2})^{2}$, then $\gamma_{3}(G) \subseteq G'^{2} \cong (C_{2})^{2} $. If $G'\cong C_{4}\times (C_{2})^{4}$, then $G'^{2}\subseteq \gamma_{3}(G)\cong C_{4}$. 
  		 Now let $G'$ is a non-abelian group. Thus $G'^{4}\gamma_{3}(G)^{2}= \gamma_{4}(G) = G'' \cong C_{2}$ and $|\zeta(G')|\leq 2^{4}$. Let $|\zeta(G')| = 4$. Now from  the Table 1 of \cite{sbhcms} possible $G'$ are $S(64,199)$ to $S(64,201)$, $S(64,215)$ to $S(64,245)$, $S(64,264)$ and $S(64,265)$. If $G'$ is  any one of the groups $S(64,199)$ to $S(64,201)$ or $S(64,215)$ to $S(64,245)$, then $\gamma_{3}(G)\subseteq G'^{2}$. If $G'$ is  any one of the groups $S(64,264)$ or $S(64,265)$, then $G'^{2}\subseteq \gamma_{3}(G)\cong C_{4}$ or $|G'^{2}\cap \gamma_{3}(G)|= 1$, $\gamma_{3}(G)\cong C_{2}$. Let $|\zeta(G')| = 8$.  But from the Table  1 of \cite{sbhcms} no  group exists with $|G''| = 2$. Now let $|\zeta(G')| = 16$. From Table 1 of  \cite{sbhcms} possible $G'$ are $S(64,193)$ to $S(64,198)$, $S(64,247)$, $S(64,248)$ and $S(64,261)$ to $S(64,263)$. For  all these groups $G''\subseteq G'^{2}\subseteq \zeta(G')$.  If $G'$ is any one of the groups  $S(64,193)$, $S(64,194)$, $S(64,261)$ or $S(64,262)$, then $G'^{4}\gamma_{3}(G)^{2} = 1$. If $G'$ is any one of the groups $S(64,195)$ to $S(64,198)$, then $\zeta(G') \cong C_{4}\times (C_{2})^{2}$,  $G'^{2}\cong (C_{2})^{2}$ and $G'^{4}\gamma_{3}(G)^{2} = 1$. If $G'$ is any one of the groups $S(64,247)$ to  $S(64,248)$, then $G'^{2} = \gamma_{3}(G) \cong C_{4}$. If $G'$ is $S(64,263)$, then $|G'^{2}\cap \gamma_{3}(G)| = 2$, $\gamma_{3}(G)\cong C_{4}$.\\

  		 Let  $\mathbf {d_{(2)} = d_{(3)} = 2, d_{(5)} = 1}$. If $p \neq 2$, then by Lemma \ref{2l1}(2), $d_{(3+1)} = 0$,  $\vartheta_{p'}(4)\geq \vartheta_{p'}(3)$ and so $d_{(5)} = 0$. If $p = 2$, then $|G'| = 2^{5}$, $|D_{(3),K(G)}| = 2^{3}$, $|D_{(4),K}(G)| = |D_{(5),K}(G)| = 2$. Thus $\gamma_{5}(G) = 1$,\sloppy $\gamma_{4}(G) \subseteq G'^{4}\gamma_{3}(G)^{2} \cong C_{2}$.\
  		 Let $G'$ be abelian. Then possible  $G'$ are $C_{8}\times C_{4}$ or $C_{8}\times (C_{2})^{2}$ or $(C_{4})^{2} \times C_{2}$ or $C_{4}\times (C_{2})^{3} $. If $G'\cong C_{8}\times C_{4}$, then either $G'^{2}\subseteq \gamma_{3}(G) \cong C_{4}\times C_{2}$ or $\gamma_{3}(G)\subseteq G'^{2}$, $\gamma_{3}(G)\cong C_{4}$. If $G'\cong C_{8}\times (C_{2})^{2}$, then $G'^{2}\subseteq \gamma_{3}(G) \cong C_{4}\times C_{2}$. If $G'\cong (C_{4})^{2} \times C_{2}$, then either $|G'^{2}\cap \gamma_{3}(G)| = 2$, $\gamma_{3}(G) \cong C_{4}$ or $|G'^{2}\cap \gamma_{3}(G)| = 4$, $\gamma_{3}(G) \cong C_{4}\times C_{2}$. If $G'\cong C_{4}\times (C_{2})^{3}$, then $|G'^{2}\cap \gamma_{3}(G)| = 2$, $\gamma_{3}(G)\cong C_{4}\times C_{2}$.
  		 
  		  Now  let $G'$ be a non - abelian. Then   $G'^{4}\gamma_{3}(G)^{2} =\gamma_{4}(G) = G'' \cong C_{2}$, $\gamma_{3}(G)\subseteq \zeta(G')$ and $|\zeta(G')| \leq 2^3$.  If $|\zeta(G')| = 4$, then from the Table 2 of  \cite{rrm},  no group exists with $|G''| = 2$. If $|\zeta(G')| = 8$, then possible $G'$ are $S(32,2)$,  $S(32,4)$, $S(32,5)$, $S(32,12)$, $S(32,22)$ to $S(32,26)$, $S(32,37)$ to $S(32,38)$ and $S(32,46)$ to $S(32,48)$ (see Table 2 of \cite{rrm}). If $G'$ is $S(32,2)$, then $G'^{4}\gamma_{3}(G)^{2} = 1$, which is not possible. If $G'$ is  any one of the groups $S(32,4)$, $S(32,5)$  or $S(32,12)$, then $\zeta(G') \cong C_{4}\times C_{2}$ and $\gamma_{3}(G)\subseteq G'^{2} \cong C_{4}\times C_{2}$. If $G'$ is any one of the groups  $S(32,22)$ to $S(32,26)$, then $|G'^{2}\cap \gamma_{3}(G) |= 2$, $\gamma_{3}(G) \cong C_{4}$ or $G'^{2}\subseteq \gamma_{3}(G)\cong C_{4}\times C_{2}$. If $G'$ is any one of the groups  $S(32,37)$ or $S(32,38)$, then either $|G'^{2}\cap \gamma_{3}(G)| = 2$, $\gamma_{3}(G)\cong (C_{2})^2$ or $G'^{2}\subseteq \gamma_{3}(G) \cong C_{4}\times C_{2}$. If $G'$ is any one of the groups $S(32,46)$ to $S(32,48)$, then either $|G'^{2}\cap \gamma_{3}(G)| = 1$, $\gamma_{3}(G)\cong C_{4}$ or $G'^{2}\subseteq \gamma_{3}(G) \cong C_{4}\times C_{2}$.   \\
  		
  		Let  $\mathbf{ d_{(2)} =  d_{(3)} = d_{(4)} =  d_{(5)} = 1}$.  Then $|G'| = p^{4}$,  $|D_{(3),K}(G)| = p^{3}$, $|D_{(4),K}(G)| = p^{2}$, $|D_{(5),K}(G)| = p$ and $D_{(6),K}(G) = 1$, $\forall p> 0$. Let $G'$ is an abelian group. If $p\geq 5$, then $G'\cong (C_{p})^{4}$,   $|G'^{p}\cap \gamma_{3}(G)| = 1$, $\gamma_{3}(G)\cong (C_{p})^{3}$, $\gamma_{4}(G)\cong (C_{p})^{2}$ and $\gamma_{5}(G) \cong C_{p}$.  If $p = 3$, then $D_{(6),K}(G) = 1$ leads to $G'^{9} = 1$, so either  $G' \cong  C_{9}\times (C_{3})^{2}$ or  $(C_{3})^{4}$. If $G'\cong C_{9}\times (C_{3})^{2}$, then  either $|G'^{2}\cap \gamma_{3}(G)| = 1$, $\gamma_{3}(G)\cong (C_{3})^{2}$ or $G'^{3}\subseteq \gamma_{3}(G) \cong (C_{3})^{3}$. If $G'\cong (C_{3})^{4}$, then $|G'^{3}\cap \gamma_{3}(G)| = 1$, $\gamma_{3}(G)\cong (C_{3})^{3}$. If $p = 2$, then $D_{(6),K}(G) = 1$ leads to $G'^{8} = 1$, so $G'\cong C_{8}\times C_{2}$ or $(C_{4})^{2}$ or $C_{4}\times (C_{2})^{2}$. If $G'\cong  C_{8}\times C_{2}$, then  either $\gamma_{3}(G)\cong C_{2}$, $|G'^{2}\cap \gamma_{3}(G)| = 1$ or $|G'^{2}\cap \gamma_{3}(G)| = 2$, $\gamma_{3}(G)\cong (C_{2})^{2}$  or $G'^{2}\subseteq \gamma_{3}(G) \cong C_{4}\times C_{2}$.  Clearly $G'\cong (C_{4})^{2}$ is not possible as $|D_{(3),K}(G)| < 2^{3}$. If $G'\cong C_{4}\times (C_{2})^{2}$, then $G'^{2}\subseteq \gamma_{3}(G)\cong C_{4}\times C_{2}$.
  		Let $G'$ be a  non-abelian and $p\geq 5$. Then  $\gamma_{6}(G) = 1$, $\gamma_{5}(G)\cong C_{p}$, $\gamma_{4}(G)\cong (C_{p})^{2}$,  $\gamma_{3}(G)\cong (C_{p})^{3}$ and  $\zeta(G')\cong (C_{p})^{2}$. Thus possible  $G'$ is $((C_{p}\times C_{p})\rtimes C_{p})\times C_{p}$, (see \cite{gjg}).  If $p = 3$, then  $\gamma_{5}(G) = 1$, $|\gamma_{4}(G)| = 3$, $|\gamma_{3}(G)| = 3^{2} $ or $3^{3}$. Now $ \gamma_{3}(G) \subseteq \zeta(G')$ and $|\zeta(G')| \leq 3^{2}$, so $|\gamma_{3}(G)| \neq 3^{3}$. Thus $\gamma_{3}(G) = \zeta(G') \cong C_{3}\times C_{3}$ and $G'' = \gamma_{4}(G) \cong C_{3} $. Therefore, from Table 2 of \cite{msbs3} the possibilities for $G'$ are $S(81,3)$, $S(81,4)$, $S(81,12)$ and $S(81,13)$, but then $|D_{(3),K}(G)| < 3^{3}$.  If $p = 2$, then   $\gamma_{5}(G) = 1$, $ G'' = \gamma_{4}(G) = G'^{4}\gamma_{3}(G)^{2} \cong C_{2}$, $\gamma_{3}(G) \subseteq \zeta(G') $ and  $|\zeta(G')| = 2^{2}$. So  possible $G'$ are $S(16,3)$, $S(16,4)$ and $S(16,11)$ to  $S(16,13)$. But for these groups $|D_{(3),K}(G)| \neq 2^{3}$ (see Table 1 of  \cite{msbs3}).  \\
  		 
  		 Let $\mathbf {d_{(2)} = 3, d_{(4)} = d_{(5)} = 1}$. If $p\neq 2$, then by Lemma \ref{2l1}(2), $d_{(2+1)} = 0$,  $\vartheta_{p'}(4)\geq \vartheta_{p'}(2)$ and so $d_{(5)} = 0$. If $p = 2$, then  by Lemma \ref{2l1}(1), as $d_{(3)} = 0$, so $d_{(5)} = 0$. Thus this case is not possible.\\

  		  Now let $\mathbf  {d_{(5)} = 0}$.  If $d_{(4)}\neq 0$, then we have the following possibilities:  $d_{(4)} = 3$, $d_{(2)} = 1$ or $d_{(4)} =  d_{(2)} = 2$, $d_{(3)} = 1$ or $d_{(4)} = 2$, $d_{(2)} = 4$ or $d_{(4)} = 1$, $d_{(2)} = 7$ or $d_{(4)} = d_{(3)} = 1$, $d_{(2)} = 5$ or $d_{(4)} = 1$, $d_{(2)} = 3$, $d_{(3)} = 2$ or $d_{(4)} = 1, d_{(2)} = 1$, $d_{(3)} = 3$ or $d_{(4)} = d_{(3)} = 2$.\\
  		  
  		  Let  $\mathbf {d_{(4)} = 3, d_{(2)} = 1}$. Now   $p \neq 3$ is not possible by Lemma \ref{2l1}. Thus  $p = 3$. Now $|G'| = 3^{4}$, $|D_{(4),K}(G)| =  |D_{(3),K}(G)| = 3^{3}$, $D_{(5),K}(G) = 1$. Let $G'$ be an abelian group, then  $G'\cong (C_{9})^{2}$ or $C_{9}\times (C_{3})^{2}$ or $(C_{3})^{4}$. If $G'\cong (C_{9})^{2}$, then $G'^{3} = \gamma_{3}(G) \cong (C_{3})^{3}$. If $G'\cong C_{9}\times (C_{3})^{2}$, then either $|G'^{3}\cap \gamma_{3}(G)| = 1$, $\gamma_{3}(G) \cong (C_{3})^{2}$ or $G'^{3}\subseteq \gamma_{3}(G) \cong (C_{3})^{3} $. If $G'\cong  (C_{3})^{4}$, then $|G'^{3}\cap \gamma_{3}(G)| = 1$, $\gamma_{3}(G) \cong (C_{3})^{3} $. Now let  $G'$ be a non-abelian group. Then $G'' = \gamma_{4}(G) \cong C_{3}$, $\gamma_{3}(G) \subseteq \zeta(G')$. So $\gamma_{3}(G) = \zeta(G') \cong (C_{3})^{2}$. Hence possible $G'$  are $S(81,3)$,  $S(81,4)$, $S(81,12)$ and $S(81,13)$. But for these groups $|D_{(3),K}(G)| < 3^{3}$ (see Table 2 of \cite{msbs3}).   \\
  		  
  		  Let $\mathbf  {d_{(4)} = d_{(2)} = 2, d_{(3)} = 1}$. Then  $|G'| = p^{5}$, $|D_{(3),K}(G)| = p^{3}$ and \sloppy  $|D_{(4),K}(G)| = p^{2}$, $\forall p >0$.  Let $G'$ be an abelian group and $p\geq 5$. Now   $D_{(5),K}(G) = 1$ leads to  $G'^{p} = 1$, so $G'\cong (C_{p})^{5}$, $\gamma_{3}(G) \cong (C_{p})^{3}$ and $|G'^{p} \cap \gamma_{3}(G)| = 1$. If $p = 3$, then $G'\cong (C_{9})^{2}\times C_{3}$ or $C_{9}\times (C_{3})^{3}$ or $(C_{3})^{5}$. If $G'\cong (C_{9})^{2} \times C_{3}$, then either $G'^{3}\subseteq \gamma_{3}(G)\cong (C_{3})^{3}$ or $|G'^{3}\cap \gamma_{3}(G)| = 3$, $\gamma_{3}(G)\cong (C_{3})^{2}$ or $|G'^{3}\cap \gamma_{3}(G)| = 1$, $\gamma_{3}(G)\cong C_{3}$.  If $G'\cong C_{9}\times (C_{3})^{3}$, then  either $|G'^{3}\cap \gamma_{3}(G)| = 1$, $\gamma_{3}(G)\cong (C_{3})^{2}$ or $G'^{3}\subseteq  \gamma_{3}(G) \cong (C_{3})^{3}$. If $G'\cong (C_{3})^{5}$, then $|G'^{3}\cap \gamma_{3}(G)| = 1$, $\gamma_{3}(G) \cong (C_{3})^{3}$. If $p = 2$, then $G'\cong (C_{4})^{2} \times C_{2}$ or $C_{4}\times (C_{2})^{3}$ or $(C_{2})^{5}$. If $G'\cong  (C_{4})^{2}\times C_{2}$, then  either $G'^{2}\subseteq \gamma_{3}(G)\cong (C_{2})^{3}$ or $|G'^{2}\cap \gamma_{3}(G)| = 2$, $\gamma_{3}(G) \cong (C_{2})^{2} $ or $|G'^{2}\cap \gamma_{3}(G)| = 1$, $\gamma_{3}(G)\cong C_{2}$. If $G'\cong C_{4}\times (C_{2})^{3}$, then either $G'^{2}\subseteq \gamma_{3}(G)\cong (C_{2})^{3}$ or $|G'^{2}\cap \gamma_{3}(G)| = 1$, $\gamma_{3}(G)\cong (C_{2})^{2} $. If $G'\cong (C_{2})^{5}$, then $|G'^{2}\cap \gamma_{3}(G)| = 1$, $\gamma_{3}(G)\cong (C_{2})^{3}$.  Let $G'$ be a non - abelian group. If $p = 2$, then $D_{(5),K}(G) =G'^{4}\gamma_{3}(G)^{2}\gamma_{5}(G) = 1$ leads to $|\gamma_{4}(G)| = 2$ or  $4$ and $|\gamma_{3}(G)| = 4$ or $8$. First, let $|\gamma_{4}(G)| =2$. Then $ G'' = \gamma_{4}(G) \cong  C_{2}$, so $\gamma_{3}(G) \cong (C_{2})^{2}$ or $(C_{2})^{3}$ and $|\zeta(G')| = 2^{2}$ or $2^{3}$. If $|\zeta(G')| = 4$, then from the Table 2 of \cite{rrm},  $|G''| \neq 2$. If $|\zeta(G')| = 8$, then $\zeta(G')\cong C_{4}\times C_{2}$ or $(C_{2})^{3}$. Therefore  possible $G'$ are $S(32,2)$, $S(32,4)$, $S(32,5)$, $S(32,12)$, $S(32,22)$ to $S(32,26)$, $S(32,37)$  and $S(32,46)$ to $S(32,48)$ (see table 2 of \cite{rrm}).  If $G'$ is any one of the groups   $S(32,4)$, $S(32,5)$, $S(32,12)$ or $S(32,37)$, then $G'^{4} \neq 1$. If $G'$ is $S(32,2)$, then $\gamma_{3}(G)\subseteq G'^{2}$. If $G'$ is  any one of the groups $S(32,22)$ to $S(32,26)$, then either $|G'^{2}\cap \gamma_{3}(G)| = 2$, $\gamma_{3}(G)\cong (C_{2})^{2}$ or $G'^{2}\subseteq \gamma_{3}(G)\cong (C_{2})^{3}$. If $G'$ is any one of the groups  $S(32,46)$ to $S(32,48)$, then  either $|G'^{2}\cap \gamma_{3}(G)| = 1$, $\gamma_{3}(G)\cong (C_{2})^{2}$ or $G'^{2}\subseteq \gamma_{3}(G)\cong (C_{2})^{3}$. Now for $|\gamma_{4}(G)| = 4$,  $\gamma_{4}(G)\cong (C_{2})^{2}$, $\gamma_{3}(G) = \zeta(G') \cong (C_{2})^{3}$. But from the Table 2 of \cite{rrm}, no such group exists with $|G''| = 4$.   Let $p= 3$, then $D_{(5),K}(G) = 1$ leads to $G'^{9} = 1$ and  $|D_{(4),K}(G)| = 3^{2}$ leads  to $|\gamma_{4}(G)| = 3$ or $3^{2}$. So  $G'' = \gamma_{4}(G) \cong C_{3}$ and $\gamma_{3}(G)\cong (C_{3})^{2}$ or $(C_{3})^{3}$. Thus $|\zeta(G')| = 9$ or 27. First let $|\zeta(G')| = 9$, then from the Table 5 of \cite{msbs3},  no such  group exists. If $|\zeta(G')| = 27$, then possible $G'$ are $S(243,2)$,   $S(243,32)$ to  $S(243,36)$ and $S(243,62)$ to  $S(243,64)$ (see  Table 5 of \cite{msbs3}). Now $\gamma_{4}(G)\subseteq G'^{3}\gamma_{3}(G)^{3} \cong (C_{3})^{2}$ and $\gamma_{3}(G)^{3} = 1$.  Hence $|G'^{3}| = 9$.   If $G'$ is  one of the group from  $S(243,32)$,  $S(243,35)$  or $S(243,62)$ to $S(243,64)$, then $|G'^{3}| \neq 9$.   If $G'$ is any one of the groups $S(243,2)$, $S(243,33)$, $S(243,34)$ or $S(243,36)$, then either $G'^{3}\subseteq \gamma_{3}(G)\cong (C_{3})^{3}$ or  $|G'^{3}\cap \gamma_{3}(G)| = 3$, $\gamma_{3}(G)\cong (C_{3})^{2}$. For $|\gamma_{4}(G)| = 9$,  $G'' = \gamma_{4}(G)\cong (C_{3})^{2}$ and $\gamma_{3}(G) = \zeta(G') \cong (C_{3})^{3}$. But  from the Table 3 no such group exists.  For $p\geq 5$,  $\gamma_{3}(G)\cong (C_{p})^{3}$ or  $(C_{p})^{2}$,  $\gamma_{4}(G)\cong (C_{p})^{2}$ or $C_{p}$ and $\gamma_{5}(G)=  1$. If $G'' = \gamma_{4}(G)\cong (C_{p})^{2}$ and   $\gamma_{3}(G)\cong (C_{p})^{3}$, then $\gamma_{3}(G) \subseteq \zeta(G')$ and hence $\gamma_{3}(G)=  \zeta(G')\cong (C_{p})^{3}$.  But from  \cite{zsx} no such group exists. If $G''= \gamma_{4}(G)\cong C_{p}$,   $\gamma_{3}(G)\cong (C_{p})^{3}$ or  $(C_{p})^{2}$ and $\gamma_{5}(G) = 1$, then   $|\zeta(G')| = p^{2}$ or $p^{3}$. Let $\zeta(G')\cong (C_{p})^{2}$, then from  \cite{zsx} no such group exists.  Now let $\gamma_{3}(G) = \zeta(G')\cong (C_{p})^{3}$, then from \cite{zsx}, $G'\cong \left< a,b, c, d, e \right>  = \left<c,d \right>\times \left<a,b\right>$, where $\left<c,d\right> \cong C_{p}\times C_{p}$ and $\left<a,b,e| = a^{p} = b^{p}= e^{p}= 1, [b,a] = e\right>  $ is a non - abelian group of order $p^{3}$ and exponent $p$.\\
  		 
  		  Let $\mathbf  {d_{(4)} = 2, d_{(2)} = 4}$.  If $p\neq 3$ and  $d_{(3)} = 0$, then by Lemma \ref{2l1}(2),  $\vartheta_{p'}(3)\geq \vartheta_{p'}(2)$,  so $d_{(4)} = 0$. If $p = 3$, then $|G'| = 3^{6}$, $|D_{(3),K}(G)| = |D_{(4),K}(G)| = 3^{2}$, $D_{(5),K}(G) = 1$ and $G'^{3} \neq 1$. Let $G'$ be an abelian group. So possible $G'$ are $ (C_{9})^{2}\times (C_{3})^{2}$ or $C_{9}\times (C_{3})^{4}$. If $G'\cong  (C_{9})^{2}\times (C_{3})^{2}$, then $\gamma_{3}(G)\subseteq G'^{3}$. If $G'\cong C_{9}\times (C_{3})^{4}$, then either $\gamma_{3}(G)\cong C_{3}$, $|G'^{3}\cap \gamma_{3}(G)| = 1$ or $G'^{3} \subseteq \gamma_{3}(G)\cong (C_{3})^{2}$.   Let $G'$ be a  non - abelian group. Now  $G'' = \gamma_{4}(G) \cong C_{3}$, $\gamma_{3}(G)\cong (C_{3})^{2}$ and $G'^{3}\subseteq \gamma_{3}(G)$ . Hence either $G'^{3} = \gamma_{3}(G) \cong (C_{3})^{2}$ or $G'^{3}\cong C_{3}$ and $|G'^{3}\cap \gamma_{4}(G)| = 1$. As $\gamma_{3}(G)\subseteq \zeta(G')$, so $|\zeta(G')| = 3^2$ or $3^3$ or $3^4$. If $|\zeta(G')| = 3^{2}$, then $\gamma_{3}(G) = \zeta(G')\cong (C_{3})^{2}$. Hence from the Table 6 of \cite{msbs3}, possible $G'$ are  $S(729,422)$  to $S(729,424)$ and  $S(729,502)$. If $G'$ is any  one of the groups $S(729,422)$ or $S(729,502)$, then $G'^{3}\cong C_{3}$, $|G'^{3}\cap \gamma_{4}(G)| = 1$. If $G'$ is any one of the groups  $S(729,423)$ or $S(729,424)$, then $G'^{3}\subseteq \gamma_{3}(G)\cong (C_{3})^{2}$, $\gamma_{4}(G) \cong C_{3}$. If $|\zeta(G')| = 3^3$, then from the Table 6 of \cite{msbs3}, no such group exists. Now if $|\zeta(G')| = 3^4$, then possible $G'$ are $S(729,103)$, $S(729,105)$, $S(729,416)$ to $S(729,421)$ and $S(729,499)$ to  $S(729,500)$. If $G'$ is any one of the groups $S(729,103)$, $S(729,105)$, $S(729,417)$, $S(729,418)$, $S(729,420)$ or $S(729,421)$, then $G'^{3} = \gamma_{3}(G)\cong (C_{3})^{2}$. If $G'$ is any one of the groups  $S(729,416)$, $S(729,419)$, $S(729,499)$ or $S(729,500)$, then $G'^{3}\subseteq \gamma_{3}(G)\cong (C_{3})^{2}$ (see Table 6 of \cite{msbs3}).\\
  		  
  		  Let $\mathbf  {d_{(4)} = 1, d_{(2)} = 7}$. If $p\neq 3$ and $d_{(2+1)} = 0$, then by Lemma \ref{2l1}(2), $\vartheta_{p'}(3)\geq \vartheta_{p'}(2)$,  so $d_{(4)} = 0$. If $p = 3$, then $|G'| = 3^{8}$, $|D_{(3),K}(G)| = |D_{(4),K}(G)| = 3$ and $\gamma_{4}(G)= 1$. Thus $G'$ is abelian in this case. Now $|D_{(4),K}(G)| =3 $ leads to $|G'^{3}| = 3$. So only possible $G'$ is $C_{9}\times (C_{3})^{6}$, $\gamma_{3}(G)\subseteq G'^{3}\cong C_{3}$.\\
  		  
  		  Let $ \mathbf  {d_{(4)} = d_{(3)} = 1, d_{(2)} = 5} $. Now $|G'| = p^{7}$, $|D_{(3),K}(G)| = p^{2}$, $|D_{(4),K}(G)| = p$, $|D_{(5),K}(G)| = 1$, for all $p>0$.  Let $G'$ be an abelian group. If $p = 2$, then $|G'^{2}| = 2$ or $4$. So possible $G'$ are $(C_{4})^{2}\times (C_{2})^{3}$ or $C_{4}\times (C_{2})^{5}$. If $G'\cong  (C_{4})^{2}\times (C_{2})^{3}$,  then  $\gamma_{3}(G)\subseteq G'^{2}$. If $G'\cong C_{4}\times (C_{2})^{5}$, then either $|G'^{2}\cap \gamma_{3}(G)| = 1$, $\gamma_{3}(G)\cong C_{2}$ or $G'^{2}\subseteq \gamma_{3}(G)\cong (C_{2})^{2}$. If $p = 3$, then $|D_{(4),K}(G)| = 3$ leads to $|G'^{3}| = 3$. So  $G'\cong C_{9}\times (C_{3})^{5}$, either $\gamma_{3}(G)\cong C_{3}$, $|G'^{3}\cap \gamma_{3}(G)| = 1$ or $G'^{3}\subseteq \gamma_{3}(G)\cong (C_{3})^{2}$. If $p \geq 5$, then $|D_{(4),K}(G)| = p$ leads to $G'^{p} = 1$ and $G'\cong (C_{p})^{7}$, $|G'^{p}\cap \gamma_{3}(G)| = 1$, $\gamma_{3}(G)\cong (C_{p})^{2}$.  Let $G'$ be a non - abelian group. Then for $p= 2$,  $G'' = \gamma_{4}(G)\cong C_{2}$, $\gamma_{3}(G)\cong (C_{2})^{2}$ and  $G'^{2}\subseteq \gamma_{3}(G)$. Since $\gamma_{3}(G)\subseteq \zeta(G')$, therefore  $|\zeta(G')| \geq 4$. If   $|\zeta(G')| = 4$ or 16, then from the Table  4 of \cite{rrm} no such group exists. If $|\zeta(G')| = 8$, then  possible $G'$ are $S(128,2157)$ to $S(128,2162)$, $S(128,2304)$ and  $S(128,2323)$ to $S(128,2325)$ (see table 4  of \cite{rrm}).  If $G'$ is any one of the groups $S(128,2157)$ to $S(128,2162)$ or $S(128,2304)$, then $G'^{2} =  \gamma_{3}(G)\cong (C_{2})^{2}$. If $G'$ is any one of the groups $S(128,2323)$ to $S(128,2325)$, then $G'^{2} \subseteq  \gamma_{3}(G)\cong (C_{2})^{2}$. Let $|\zeta(G')| = 32$, then from the Table 4 of \cite{rrm}, possible $G'$ are $S(128,2151)$ to $S(128,2156)$, $S(128,2302)$, $S(128,2303)$ and  $S(128,2320)$ to $S(128,2322)$.  If $G'$ is any one of the groups $S(128,2151)$ to $S(128,2156)$, $S(128,2302)$ or $S(128,2303)$, then $G'^{2} = \gamma_{3}(G)\cong (C_{2})^{2}$. If $G'$ is any one of the groups $S(128,2320)$ to $S(128,2322)$, then $G'^{2}\subseteq \gamma_{3}(G)\cong (C_{2})^{2}$.  If $p = 3$, then $D_{(5),K}(G) = G'^{9}\gamma_{3}(G)^{3}\gamma_{5}(G) = 1$ leads to $\gamma_{5}(G) = 1$.  Now $G'' = \gamma_{4}(G)\cong C_{3}$, $\gamma_{3}(G)\cong (C_{3})^{2}$ and $|\zeta(G')|\geq 3^2$.     First let $exp(G') = 9$. If  $|\zeta(G')| = 3^2$ and $3^4$, then from the Table 2, no such group exists.  If $|\zeta(G')| = 3^3$, then  possible  $G'$ are $S(2187,5874)$, $S(2187,5876)$, $S(2187,9100)$ to  $S(2187,9105)$  and $S(2187,9306)$ to $S(2187,9307)$ (see Table 2). If $G'$ is any one of the groups  $S(2187,5874)$, $S(2187,5876)$,  $S(2187,9102)$ to  $S(2187,9103)$,  $S(2187,9104)$ or  $S(2187,9105)$, then $G'^{3} = \gamma_{3}(G)\cong (C_{3})^{2}$.  If $G'$  is any  one of the groups  $S(2187,9100)$ to $S(2187,9101)$,$S(2187,9306)$ or $S(2187,9307)$, then $G'^{3}\subseteq \gamma_{3}(G)\cong (C_{3})^{2}$. Now let $|\zeta(G')| = 3^5$. So possible $G'$  are $S(2187,5867)$, $S(2187,5870)$, $S(2187,5872)$, $S(2187,9094)$ to $S(2187,9099)$ and  $S(2187,9303)$ to $S(2187,9304)$ (see Table 2 ). If $G'$ is any one of the groups $S(2187,5867)$,  $S(2187,5870)$, $S(2187,5872)$ or  $S(2187,9096)$ to $S(2187,9099)$, then $G'^3 = \gamma_{3}(G)\cong (C_{3})^{2}$. If $G'$ is  any one of the groups $ S(2187,9094)$ to $S(2187,9095)$, $S(2187,9303)$ or $S(2187,9304)$, then $G'^{3}\subseteq \gamma_{3}(G)\cong (C_{3})^{2}$.  For $p\geq 5$, $D_{(5),K}(G) = G'^{p}\gamma_{3}(G)^{p}\gamma_{5}(G) = 1$ leads to  $exp(G') = p$. Now let $exp(G') = p$, for $p\geq 3$. Therefore $G'' =\gamma_{4}(G)\cong C_{p}$ and $\gamma_{3}(G)\cong (C_{p})^{2}$. Therefore possible $G'$ are $<a,b,c,d,e,f,g: a^p = b^p = c^p = d^p =e^p= f^p = g^p = 1,  [b,a] =  c>$ and $<a,b,c,d,e,f,g: a^p = b^p = c^p = d^p =e^p= f^p = g^p = 1, [b,a]= e, [d,c] =e>$ and for these groups,  we have $\gamma_{3}(G)\cong (C_{p})^{2}$ (see\cite{wid}). \\ 
  		 
  		  Let $\mathbf  {d_{(4)} = 1, d_{(2)} = 3, d_{(3)} = 2}$. Now $|G'|= p^{6}$, $|D_{(4),K}(G)| = p$, $|D_{(3),K}(G)| = p^{3}$, for all $p>0$. Let $G'$ be an abelian group. For $p=2$,   $G'\cong (C_{4})^{3}$ or $(C_{4})^{2} \times (C_{2})^{2}$ or $C_{4}\times (C_{2})^{4}$ or $(C_{2})^{6}$. If $G'\cong  (C_{4})^{3}$, then $\gamma_{3}(G)\subseteq G'^{2}$. If $G'\cong (C_{4})^{2}\times (C_{2})^{2}$, then  either $|G'^{2}\cap \gamma_{3}(G)| =1 $, $\gamma_{3}(G)\cong C_{2}$ or $|G'^{2} \cap \gamma_{3}(G)| = 2$, $\gamma_{3}(G)\cong (C_{2})^{2}$ or $G'^{2}\subseteq \gamma_{3}(G)\cong (C_{2})^{3}$. If $G'\cong (C_{2})^{6}$, then $|G'^{2}\cap \gamma_{3}(G)| = 1$, $\gamma_{3}(G)\cong (C_{2})^{3}$. For  $p = 3$, $|D_{(4),K}(G)| = 3$ leads to $|G'^{3}| = 3$. Hence  $G'\cong C_{9}\times( C_{3})^{4}$.   For this group, either $\gamma_{3}(G)\cong (C_{3})^{2}$, $|G'^{2}\cap \gamma_{3}(G)|=  1$ or  $G'^{3}\subseteq \gamma_{3}(G)\cong (C_{3})^{3}$. For $p\geq 5$,  $D_{(5),K}(G) = 1$ leads to $G'^{p} = 1$  and  $G'\cong (C_{p})^{6}$, $|G'^{p}\cap\gamma_{3}(G)| = 1$, $\gamma_{3}(G)\cong( C_{p})^{3}$.  Let $G'$ be a non-abelian group. For $p =2$,  $G'' = \gamma_{4}(G)\cong C_{2}$ and $\gamma_{3}(G)\cong (C_{2})^{2}$ or $(C_{2})^{3}$. If $\gamma_{3}(G)\cong (C_{2})^{2}$, then $|\zeta(G')| = 4$, 8 or 16. Let $|\zeta(G')| = 4$. Then   $\gamma_{3}(G) = \zeta(G')\cong (C_{2})^{2}$. Therefore, possible $G'$ are $S(64,199)$ to $S(64,201)$ and  $S(64,264)$ to $S(64,265)$ (see Table 1 of \cite{sbhcms}).  If  $G'$ is any one of the groups  $S(64,199)$ to $S(64,201)$, then $|G'^{2}\cap \gamma_{3}(G)| = 2$. If $G'$ is any one of the groups $S(64,264)$ or $S(64,265)$, then $|G'^{2}\cap \gamma_{3}(G) |= 1$.  Let $|\zeta(G')| = 8$, therefore from the Table 1 of \cite{sbhcms}, no such group exists.  Let $|\zeta(G')| = 16$. Then  for $|G'^{2}| = 8$, possible $G'$ are $S(64,56)$ to $S(64,59)$. For these groups, $\gamma_{3}(G)\subseteq G'^{2}$, $\gamma_{3}(G)\cong (C_{2})^{2}$. For  $|G'^{2}| = 4$, possible $G'$ are $S(64,193)$ to $S(64,198)$ and for these groups $|G'^{2}\cap \gamma_{3}(G)| = 2$. For $|G'^{2}| = 2$,  possible $G'$ are $S(64,261)$ to $S(64,263)$.  For these groups  $2= |G''\cap G'^{2}| \leq |G'^{2}\cap \gamma_{3}(G)| = 1$. If $|\gamma_{3}(G)| = 8$,  then  $|\zeta(G')| = 8$ or 16. Let $|\zeta(G')| = 8$, then  from the  Table  1 of \cite{sbhcms}  no such group exists. Let $|\zeta(G')| = 16$. Now  for $|G'^{2}| = 8$,  possible $G'$ are $S(64,56)$ to $S(64,59)$ and for these groups $\gamma_{3}(G)\subseteq G'^{2}$. For $|G'^{2}| = 4$,   possible $G'$ are $S(64,193)$ to $S(64,198)$, and for these groups  $G'^{2}\subseteq \gamma_{3}(G)\cong (C_{2})^{3}$. For $|G'^{2}| = 2$, no  group exists ( see Table 1  of \cite{sbhcms}). For $p= 3$, $G'' = \gamma_{4}(G) \cong C_{3}$. So $\gamma_{3}(G)\cong (C_{3})^{2}$ or $(C_{3})^{3}$ and  $|\zeta(G')|\geq 3^{2}$. Let $\gamma_{3}(G)\cong (C_{3})^{2}$ and  $|\zeta(G')| = 9$.  Then possible $G'$ are $S(729,422)$ to  $S(729,424)$ and $S(729,502)$ (see Table 6 of \cite{msbs3}). But then $|D_{(3),K}(G)| \neq 3^3$. If $|\zeta(G')| = 3^3$, then no  group exists (see Table 6 of \cite{msbs3}). Let  $|\zeta(G')|= 3^4$. If $|\gamma_{3}(G)| = 3^3$, then $G'$ is any one of the groups  $S(729,103)$ to $S(729,106)$, $S(729,416)$ to $S(729,420)$ or $S(729,499)$ to $S(729,500)$. For all these groups $G'^{3}\subseteq \gamma_{3}(G)\cong (C_{3})^{3}$. Let $|\gamma_{3}(G)| = 3^{2}$, then possible $G'$ are $S(729,103)$ to $S(729,106)$, $S(729,416)$ to $S(729,421)$ and $S(729,499)$ to $S(729,500)$. If $G'$ is any one of the groups $S(729,103)$,  $S(729,105)$, $S(729,417)$, $S(729,418)$, $S(729,420)$ or $S(729,421)$, then $|G'^{3}\cap \gamma_{3}(G)| = 3$, $\gamma_{3}(G)\cong (C_{3})^{2}$. If $G'$ is any one of the groups  $S(729,104)$ or  $S(729,106)$, then $\gamma_{3}(G)\subseteq G'^{2}$. If $G'$ is  any one of the groups  $S(729,416)$, $S(729,419)$, $S(729,499)$ or $S(729,500)$, then $|G'^{3}\cap \gamma_{3}(G)| = 1$, $\gamma_{3}(G)\cong (C_{3})^{2}$ (see Table 6 of \cite{msbs3}).  For $p\geq 5$,  $|D_{(5),K}(G)| = 1$ leads to $G'^{p} = 1$,  $\gamma_{3}(G)\cong (C_{p})^{3}$, $G'' = \gamma_{4}(G)\cong C_{p}$, $|\zeta(G')|=  p^{3}$ or $p^{4}$.  If  $|\zeta(G')|=  p^{3}$, then no  group exists (see \cite{rj}). If  $|\zeta(G')|=  p^{4}$. Then $G'\cong \phi_{2}(1^5)\times (1)$, $\gamma_{3}(G)\cong (C_{p})^{3}$, $\zeta(G')\cong (C_{p})^{4}$ and $|G'^{p}\cap \gamma_{3}(G)| = 1$ (see \cite{rj}). \\
  		  
  		  Let $ \mathbf {d_{(4)} =  d_{(2)} = 1, d_{(3)} = 3}$. Then  $|G'| =  p^{5}$, $|D_{(4),K}(G)| = p$,  \sloppy   $|D_{(3),K}(G)| = p^{4}$. Let $G'$ be an abelian group. For $p= 2$,  $G'\cong (C_{4})^{2}\times C_{2}$ or $C_{4}\times (C_{2})^{3}$ or $(C_{2})^{4}$. If $G'\cong  (C_{4})^{2}\times C_{2}$,  then either $|G'^{2}\cap \gamma_{3}(G)| = 1$, $\gamma_{3}(G)\cong (C_{2})^{2}$ or $|G'^{2}\cap \gamma_{3}(G)| = 2$, $\gamma_{3}(G)\cong (C_{2})^{3} $ or $G'^{2}\subseteq \gamma_{3}(G)\cong (C_{2})^{4}$. If $G'\cong C_{4}\times (C_{2})^{3}$, then either $|G'^{2}\cap \gamma_{3}(G)| = 1$, $\gamma_{3}(G) \cong (C_{2})^{3}$ or $G'^{2}\subseteq \gamma_{3}(G)\cong (C_{2})^{4}$. If $G'\cong (C_{2})^{4}$, then $|G'^{2}\cap \gamma_{3}(G)| = 1$, $\gamma_{3}(G)\cong (C_{2})^{4}$. Now for $p = 3$, $|D_{(4),K}(G)| = 3$ leads to $|G'^{3}| = 3$. So $G'\cong C_{9}\times (C_{3})^{3}$, then either $\gamma_{3}(G)\cong (C_{3})^{3}$, $|G'^{3}\cap \gamma_{3}(G)| = 1$ or $\gamma_{3}(G)\cong (C_{3})^{4}$, $G'^{3}\subseteq \gamma_{3}(G)$. Now for $p\geq 5$, $D_{(5),K}(G) = 1$ leads to $G'^{p} = 1$. So  $G'\cong (C_{p})^{5}$, $\gamma_{3}(G)\cong (C_{p})^{4}$ and $|G'^{p}\cap \gamma_{3}(G)| = 1$. Let $G'$ be a non-abelian group. For $p= 2$,  $G'' = \gamma_{4}(G)\cong C_{2}$ and  $|\gamma_{3}(G)| \geq  4$. As $\gamma_{3}(G)\subseteq \zeta(G')$, hence $|\zeta(G')| = 4$ or 8. If $|\zeta(G')| = 4$, then from the Table 2 of \cite{rrm}, $|G''| \neq 2$. If $|\zeta(G')| =8$, then $\gamma_{3}(G) = \zeta(G')\cong (C_{2})^{3}$. Therefore only possible $G'$ is $S(32,2)$ and for this group $|G'^{2}\cap \gamma_{3}(G)| = 4$ (see Table 2 of \cite{rrm}).  For $p=  3$,  $G'' = \gamma_{4}(G)\cong C_{3}$, $G'^{3}\subseteq \gamma_{4}(G)\cong C_{3}$  and $|\gamma_{3}(G)| \geq 3^{2}$.  If $|\gamma_{3}(G)| = 3^{2} $ and  $|\zeta(G')| = 3^2$ or $3^3$, then  from the Table 5 of \cite{msbs3} no such group exists. If $|\gamma_{3}(G)| = 3^{3}$, then $\gamma_{3}(G) = \zeta(G') \cong (C_{3})^{3}$, so only possible $G'$ is $S(243,32)$ and for this group $|G'^{3}\cap \gamma_{3}(G)| = 1$. For $p\geq 5$, $D_{(5),K}(G) = G'^{p}\gamma_{3}(G)^{p}\gamma_{5}(G) = 1$ leads to $G'^{p} = 1$. Now $ G'' = \gamma_{4}(G)\cong C_{p}$,   $\gamma_{3}(G)\cong (C_{p})^{4}$  and $\gamma_{3}(G)=  \zeta(G')\cong (C_{p})^{4}$. Thus $G'$ is abelian in this case.\\
  		  
  		  Let $\mathbf  {d_{(4)} = d_{(3)} = 2}$. Since $d_{(1+1)} = 0$, therefore by Lemma \ref{2l1}(2), $\vartheta_{p'}(2)\geq \vartheta_{p'}(1)$ for all $p>0$ and so $d_{(3)} = 0$.\\

  		  Let $\mathbf  {d_{(4)} = 0}$.  Then  we have the following possibilities:
  		  $d_{(2)} = 10$ or $d_{(2)} = 8$, $d_{(3)} = 1$ or $d_{(2)} = 6$, $d_{(3)} = 2$ or $d_{(2)} = 4$, $d_{(3)} = 3$ or $d_{(2)} = 2$, $d_{(3)} = 4$ or  $d_{(3)} = 5$.\\
  		  
  		 Let  $\mathbf  {d_{(2)} = 10}$.  Then  $|G'| = p^{10}$, $|D_{(3),K}(G)| = 1$ and hence  $G'^{p} = \gamma_{3}(G) = 1$, for all $p>0$.  Thus  $G'$ is abelian and  $G'\cong (C_{p})^{10}$, $\gamma_{3}(G)= 1$. \\
  		 
  		 Let  $\mathbf  {d_{(2)} = 8, d_{(3)} = 1}$. Thus  $|G'|= p^{9}$, $|D_{(3),K}(G)| = p$ and $G'$ is abelian for all $p>0$.  For $p\geq 3$,  $G'^{p} =  1$  and hence  $G'\cong (C_{p})^{9}$,  $\gamma_{3}(G)\cong C_{p}$, $|G'^{p}\cap \gamma_{3}(G)| = 1$. For $p =2$, $|D_{(3),K}(G)| = 2$ leads to $|G'^{2}|\leq 2$. So $G'\cong C_{4}\times (C_{2})^{7}$ or $(C_{2})^{9}$. If $G'\cong C_{4}\times (C_{2})^{7}$, then $\gamma_{3}(G)\subseteq G'^{2}\cong C_{2}$. If $G'\cong (C_{2})^{9}$, then $\gamma_{3}(G)\cong C_{2}$, $|G'^{2}\cap \gamma_{3}(G)| = 1$.\\
  		 
  		 Let $\mathbf {d_{(2)} = 6, d_{(3)} = 2}$. Thus $|G'|= p^{8}$, $|D_{(3),K}(G)| = p^{2}$ and $G'$ is abelian for  all $p>0$ . For $p\geq 3$, $G'^{p} =  1$, hence $G'\cong (C_{p})^{8}$,  $\gamma_{3}(G)\cong (C_{p})^{2} $ and $|G'^{p}\cap \gamma_{3}(G)| = 1$. For $p =2$, $|D_{(3),K}(G)| = 2^{2}$ leads to $|G'^{2}|\leq 4$. So $G'\cong (C_{2})^{8}$ or $C_{4}\times (C_{2})^{6}$ or $(C_{4})^{2}\times (C_{2})^{4}$. If $G'\cong (C_{2})^{8}$, then $|G'^{2}\cap \gamma_{3}(G)| = 1$, $\gamma_{3}(G)\cong (C_{2})^{2}$. If $G'\cong C_{4}\times (C_{2})^{6}$, then  either $|G'^{2}\cap \gamma_{3}(G)| = 1$, $\gamma_{3}(G)\cong C_{2}$ or $G'^{2}\subseteq \gamma_{3}(G)\cong (C_{2})^{2}$. If $G'\cong (C_{4})^{2}\times (C_{2})^{4}$, then $\gamma_{3}(G)\subseteq G'^{2}$. \\
  		 
  		 Let  $\mathbf  {d_{(2)} = 4, d_{(3)} = 3} $.  Thus $|G'|= p^{7}$, $|D_{(3),K}(G)| = p^{3}$ and $G'$ is abelian, for all $p>0$. If $p\geq 3$, then $G'\cong (C_{p})^{7}$ and $\gamma_{3}(G)\cong (C_{p})^{3}$, $|G'^{p}\cap \gamma_{3}(G)| = 1$. For $p =2$, $|D_{(3),K}(G)| = 2^{3}$ leads to $|G'^{2}|\leq 8$. So $G'\cong (C_{4})^{3}\times C_{2}$ or $ (C_{4})^{2}\times (C_{2})^{3}$ or $C_{4}\times (C_{2})^{5}$ or $(C_{2})^{7}$. If $G'\cong (C_{4})^{3}\times C_{2}$, then $\gamma_{3}(G)\subseteq G'^{2}$. If $G'\cong  (C_{4})^{2}\times (C_{2})^{3}$, then either $|G'^{2}\cap \gamma_{3}(G)| = 1$, $\gamma_{3}(G)\cong C_{2}$ or $|G'^{2}\cap \gamma_{3}(G)| = 2$, $\gamma_{3}(G)\cong C_{2}\times C_{2}$ or $G'^{2}\subseteq \gamma_{3}(G)\cong (C_{2})^{3}$. If $G'\cong C_{4}\times (C_{2})^{5}$, then either $|G'^{2}\cap \gamma_{3}(G)| = 1$, $\gamma_{3}(G)\cong (C_{2})^{2}$ or $G'^{2}\subseteq \gamma_{3}(G)\cong (C_{2})^{3}$. If $G'\cong (C_{2})^{7}$, then $|G'^{2}\cap \gamma_{3}(G)| = 1$, $\gamma_{3}(G)\cong (C_{2})^{3}$. \\
  		 
  		 Let $\mathbf  {d_{(2)} = 2, d_{(3)} = 4}$. Thus  $|G'| =p^{6}$, $|D_{(3),K}(G)| = p^{4}$  and $G'$ is abelian for all $p>0$. For $p\geq 3$,  $G'^{p} = 1$, so $G'\cong (C_{p})^{6}$, $|G'^{p}\cap \gamma_{3}(G)| = 1$ and $\gamma_{3}(G)\cong (C_{p})^{4}$. For $p = 2$, $|D_{(3),K}(G)| = 2^{4}$ leads to $|G'^{2}| \leq 8$. So $G'\cong (C_{4})^{3}$ or $(C_{4})^{2}\times (C_{2})^{2}$ or $C_{4}\times (C_{2})^{4}$ or $(C_{2})^{6}$. If $G'\cong (C_{4})^{3}$, then $\gamma_{3}(G)\subseteq G'^{2}$. If $G'\cong (C_{4})^{2}\times (C_{2})^{2}$, then  either $|G'^{2}\cap \gamma_{3}(G)| = 1$, $\gamma_{3}(G)\cong (C_{2})^{2}$ or $|G'^{2}\cap \gamma_{3}(G)| = 2$, $\gamma_{3}(G)\cong (C_{2})^{3}$ or $G'^{2}\subseteq \gamma_{3}(G)\cong (C_{2})^{4}$. If $G'\cong C_{4}\times (C_{2})^{4}$, then either $|G'^{2}\cap \gamma_{3}(G)| = 1$, $\gamma_{3}(G)\cong (C_{2})^{3}$ or $G'^{2}\subseteq \gamma_{3}(G) \cong (C_{2})^{4}$. If $G'\cong (C_{2})^{6}$, then $|G'^{2}\cap \gamma_{3}(G)| = 1$, $\gamma_{3}(G)\cong (C_{2})^{4}$.  \\
  		 
  		 Let  $\mathbf  {d_{(3)} = 5}$. Since $d_{(1+1)} = 0$, therefore by Lemma \ref{2l1}(2), $\vartheta_{p'}(2)\geq \vartheta_{p'}(1)$ for all $p>0$ and so $d_{(3)} = 0$. \\
  		 
  		Converse can be easily done by computing $d_{(m)}$'s in each case.
  		  
  		\end{proof}
  		\section*{Acknowledgment}
  		The second author is thankful to Science and Engineering Research board (SERB), India [grant number: ECR/2016/001274] for financial assistance.

  \begin{center}
  	
  	\begin{table}[ht]
  			
  		{\begin{tabular}{@{}cccccccc@{}} \toprule
  					$G'$ & $G'^{5}$  & exp($G'$)  &$\zeta(G')$ & $G''$  & $G''\cap G'^{5}$ &$G''\cap \zeta(G')$ & $G'^{5}\cap \zeta(G')$ \\ 
  					\hline\\
  					S(3125,2) &$C_{5}\times C_{5}$ & 25 & $C_{5}\times C_{5}\times C_{5}$& $C_{5}$ &1& $C_{5}$&$C_{5}\times C_{5}$ \\ [.5ex]
  					S(3125,16) &$C_{25}\times C_{5}$ & 125 & $C_{25}\times C_{5}$& $C_{5}$&$C_{5}$& $C_{5}$&$C_{25}\times C_{5}$  \\ [.5ex]
  					S(3125,17) & $C_{25}$& 125 & $C_{25}\times C_{5}$& $C_{5}$ & 1& $C_{5}$&$C_{25}$  \\ [.5ex]
  					S(3125,26) &$C_{25}\times C_{5}$ & 125& $C_{25}\times C_{5}$&$ C_{5}$ &$ C_{5}$& $ C_{5}$&$C_{25}\times C_{5}$ \\ [.5ex]

  					S(3125,29) &$C_{125}$ & 625& $C_{125}$&$ C_{5}$ &$ C_{5}$& $ C_{5}$&$C_{125}$ \\ [.5ex]
  					S(3125,40) &$ C_{5}$ & 25& $C_{5}\times C_{5}\times C_{5}$&$ C_{5}$ &1& $ C_{5}$&$ C_{5}$ \\ [.5ex]
  					S(3125,41) &$C_{5}\times C_{5}$ & 25& $C_{5}\times C_{5}\times C_{5}$&$ C_{5}$ &$ C_{5}$& $ C_{5}$&$C_{5}\times C_{5}$ \\ [.5ex]
  					S(3125,42) &$C_{5}\times C_{5}$ & 25& $C_{25}\times C_{5}$&$ C_{5}$ &$ C_{5}$& $ C_{5}$&$C_{5}\times C_{5}$ \\ [.5ex]
  					S(3125,43) &$ C_{5}$ & 25& $C_{25}\times C_{5}$&$ C_{5}$ &1& $ C_{5}$&$ C_{5}$ \\ [.5ex]
  					S(3125,44) &$ C_{5}\times C_{5}$ & 25& $C_{25}\times C_{5}$&$ C_{5}$ &$C_{5}$& $ C_{5}$&$ C_{5}\times C_{5}$\\ [.5ex]
  					S(3125,59)&$ C_{25}$ & 125& $C_{25}\times C_{5}$&$ C_{5}$ &$C_{5}$& $ C_{5}$&$  C_{25}$ \\ [.5ex]

  					S(3125,60) &$ C_{25}$ & 125& $C_{125}$&$ C_{5}$ &$C_{5}$& $ C_{5}$&$ C_{25} $ \\ [.5ex]
  					S(3125,72) &1 & 5& $C_{5}\times C_{5}\times C_{5}$&$ C_{5}$ &1& $ C_{5}$&1  \\ [.5ex]
  					S(3125,73)  &$C_{5}$ & 25& $C_{5}\times C_{5}\times C_{5}$&$ C_{5}$ &$ C_{5}$& $ C_{5}$&$C_{5}$ \\ [.5ex]

  					S(3125,74)  &$C_{5}$ & 25& $C_{25}\times C_{5}$&$ C_{5}$ &$ C_{5}$& $ C_{5}$&$C_{5}$  \\ [.5ex]
  					S(3125,75) &1 & 5& $ C_{5}$&$ C_{5}$ &1& $ C_{5}$&1 \\ [.5ex]
  					S(3125,76) &$C_{5}$ & 25& $C_{5}$&$ C_{5}$ &$ C_{5}$& $ C_{5}$&$ C_{5}$

  				\label{64}
  			\end{tabular}}
  		
  		\end{table}

  \end{center}
  \begin{center}

  \begin{table}[ht]
  
  	{\begin{tabular}{@{}ccccccc@{}} \toprule

  		$G'$ & $G'^{3}$  & exp($G'$)  &$\zeta(G')$ & $G''$  & $G''\cap G'^{3}$ & $G'^{3}\cap \zeta(G')$ \\ 
  			\hline\\
  	
  		S(2187,5867) &$C_{3}\times C_{3}$ &  9 & $C_{9}\times C_{9}\times C_{3}$& $C_{3}$ & $1$ &$C_{3}\times C_{3}$ \\ [.5ex]
  		S(2187,5868) &$C_{3}\times C_{3}\times C_{3}$ & 9 & $C_{9}\times C_{9}\times C_{3}$ & $C_{3}$&$C_{3}$& $C_{3}\times C_{3}\times C_{3} $  \\ [.5ex]
  		S(2187,5869) &$C_{3}\times C_{3}\times C_{3}$ & 9 & $C_{9}\times C_{9}\times C_{3}$ & $C_{3}$&$C_{3}$& $C_{3}\times C_{3}\times C_{3} $  \\ [.5ex]
  		S(2187,5870) &$C_{3}\times C_{3}$ & 9& $C_{9}\times C_{3}\times C_{3}\times C_{3}$&$ C_{3}$& $1$  &$C_{3}\times C_{3}$ \\[.5ex]

  		S(2187,5871) &$C_{3}\times C_{3}\times C_{3}$ & 9& $C_{9}\times C_{3}\times C_{3}\times C_{3}$&$ C_{3}$ &$ C_{3}$&  $C_{3}\times C_{3}\times C_{3}$ \\ [.5ex]
  		S(2187,5872) &$ C_{3}\times C_{3}$ & 9& $C_{3}\times C_{3}\times C_{3}\times C_{3}\times C_{3}$&$ C_{3}$ &1 & $C_{3}\times C_{3}$ \\ [.5ex]
  		S(2187,5873) &$C_{3}\times C_{3} \times C_{3}$ & 9& $C_{9}\times C_{3}\times C_{3}\times C_{3}$&$ C_{3}$ &$ C_{3}$&$C_{3}\times C_{3}\times C_{3}$ \\ [.5ex]
  		S(2187,5874) &$C_{3}\times C_{3}$ & 9& $C_{3}\times C_{3}\times C_{3}$&$ C_{3}$ & $1$&$C_{3}\times C_{3}$ \\ [.5ex]
  		S(2187,5875) &$ C_{3}\times C_{3}\times C_{3}$ & 9& $C_{3}\times C_{3}\times C_{3}$&$ C_{3}$ & $ C_{3}$&$ C_{3}\times C_{3}\times C_{3} $ \\ [.5ex]
  		S(2187,5876) &$ C_{3}\times C_{3}$ & 9& $C_{3}\times C_{3}\times C_{3}$&$ C_{3}$ &$1$& $C_{3}\times C_{3}$\\ [.5ex]
  		S(2187,5877)&$ C_{3}\times C_{3}\times C_{3}$ & 9& $C_{3}\times C_{3}\times C_{3}$&$ C_{3}$ &$C_{3}$&$C_{3}\times C_{3}\times C_{3}$ \\ [.5ex]

  		S(2187,9094) &$ C_{3}$ & 9& $C_{3}\times C_{3}\times C_{3}\times C_{3}\times C_{3}$&$ C_{3}$ &$1$&  $ C_{3}$ \\ [.5ex]
  		S(2187,9095) &$ C_{3}$ & 9& $C_{9}\times C_{3}\times C_{3}\times C_{3}$&$ C_{3}$ &$1$&  $ C_{3}$ \\ [.5ex]
  	
  		S(2187,9096)  &$C_{3}\times C_{3}$ & 9& $C_{9}\times C_{9}\times C_{3}$&$ C_{3}$ &$ C_{3}$ &$C_{3}\times C_{3}$ \\ [.5ex]

  		S(2187,9097)  &$C_{3}\times C_{3}$ & 9& $C_{9}\times C_{3}\times C_{3}\times C_{3} $&$ C_{3}$ &$ C_{3}$ &$C_{3}\times C_{3}$ \\ [.5ex]
  	 	S(2187,9098) &$C_{3}\times C_{3}$ & 9& $C_{9}\times C_{3}\times C_{3}\times C_{3} $&$ C_{3}$ &$ C_{3}$ &$C_{3}\times C_{3}$ \\ [.5ex]
  	 	S(2187,9099)  &$C_{3}\times C_{3}$ & 9& $C_{3}\times C_{3}\times C_{3}\times C_{3}\times C_{3}$&$ C_{3}$ &$ C_{3}$& $ C_{3}\times C_{3}$  \\ [.5ex]
  	 	S(2187,9100)  &$C_{3}$ & 9& $C_{3}\times C_{3}\times C_{3}$&$ C_{3}$ &$ 1$& $ C_{3}$  \\ [.5ex]
  	 	S(2187,9101)  &$C_{3}$ & 9& $C_{9}\times C_{3}$&$ C_{3}$ &$ 1$& $ C_{3}$  \\ [.5ex]
  	 	S(2187,9102)  &$C_{3}\times C_{3}$ & 9& $C_{9}\times C_{3}$&$ C_{3}$ &$ C_{3}$& $ C_{3}\times C_{3}$  \\ [.5ex]
  	 	S(2187,9103)  &$C_{3}\times C_{3}$ & 9& $C_{9}\times C_{3}$&$ C_{3}$ &$ C_{3}$& $ C_{3}\times C_{3}$  \\ [.5ex]
  	 	S(2187,9104)  &$C_{3}\times C_{3}$ & 9& $C_{3}\times C_{3}\times C_{3}$&$ C_{3}$ &$ C_{3}$& $ C_{3}\times C_{3}$  \\ [.5ex]
  	 	S(2187,9105)  &$C_{3}\times C_{3}$ & 9& $C_{3}\times C_{3}\times C_{3}$&$ C_{3}$ &$ C_{3}$& $ C_{3}\times C_{3}$  \\ [.5ex]
  	 	S(2187,9303)  &$C_{3}$ & 9& $C_{3}\times C_{3}\times C_{3}\times C_{3}\times C_{3}$&$ C_{3}$ &$ C_{3}$& $ C_{3}$  \\ [.5ex]
  	 	S(2187,9304)  &$C_{3}$ & 9& $C_{9}\times C_{3}\times C_{3}\times C_{3} $&$ C_{3}$ &$ C_{3}$& $ C_{3}$  \\ [.5ex]
  	 	
  	 	S(2187,9306)  &$C_{3}$ & 9& $C_{3}\times C_{3}\times C_{3}$&$ C_{3}$& $ C_{3}$&$C_{3}$  \\ [.5ex]
  	 	S(2187,9307)  &$C_{3}$ & 9& $C_{9}\times C_{3}$&$ C_{3}$ &$ C_{3}$& $ C_{3}$  \\ [.5ex]
  		S(2187,9309)  &$C_{3}$ & 9& $C_{3}$&$ C_{3}$ &$ C_{3}$& $ C_{3}$ 
  		\label{2187}
  \end{tabular}}
  
\end{table}

\end{center}

\begin{center}
	
	 \begin{table}[ht]
	 	
	 	{\begin{tabular}{@{}ccccccc@{}} \toprule

$G'$ & $G'^{3}$  & exp($G'$)  &$\zeta(G')$ & $G''$  & $G''\cap G'^{3}$ & $G'^{3}\cap \zeta(G')$ \\ 
	\hline\\

	S(243,13) &$C_{3}\times C_{3}$ & 9 & $C_{3}\times C_{3}$& $C_{3}\times C_{3}$ & $C_{3}$ &$C_{3}\times C_{3}$ \\ [.5ex]
	S(243,14) &$C_{3}\times C_{3}$ & 9 & $C_{3}\times C_{3}$& $C_{3}\times C_{3}$ & $C_{3}$ &$C_{3}\times C_{3}$  \\ [.5ex]
	S(243,15) &$C_{3}\times C_{3}$ & 9 & $C_{3}\times C_{3}$& $C_{3}\times C_{3}$ & $C_{3}$ &$C_{3}\times C_{3}$ \\ [.5ex]
	
	S(243,16) &$C_{9}$ & 27& $C_{9}$&$ C_{3}\times C_{3}$& $C_{3}$  &$C_{9}$  \\ [.5ex]
	S(243,17) &$C_{3}\times C_{3}$ & 9 & $C_{3}\times C_{3}$& $C_{3}\times C_{3}$ & $C_{3}$ &$C_{3}\times C_{3}$ \\ [.5ex]
	S(243,18) &$C_{3}\times C_{3}$ & 9 & $C_{3}\times C_{3}$& $C_{3}\times C_{3}$ & $C_{3}$ &$C_{3}\times C_{3}$ \\ [.5ex]
	S(243,19) &$C_{9}$ & 27& $C_{9}$&$ C_{3}\times C_{3}$& $C_{3}$  &$C_{9}$ \\ [.5ex]
	S(243,20) &$C_{9}$ & 27& $C_{9}$&$ C_{3}\times C_{3}$& $C_{3}$  &$C_{9}$ \\ [.5ex]
	S(243,22) &$C_{9}\times C_{3}$ & 27& $C_{3}$&$ C_{9}$& $C_{9}$  &$C_{3}$ \\ [.5ex]
	S(243,37) &$1$ & 3 & $C_{3}\times C_{3}$& $C_{3}\times C_{3}$ & $1$ &$1$ \\ [.5ex]
	S(243,38) &$C_{3}$ & 9 & $C_{3}\times C_{3}$& $C_{3}\times C_{3}$ & $C_{3}$ &$C_{3}$  \\ [.5ex]
	S(243,39) &$C_{3}$ & 9 & $C_{3}\times C_{3}$& $C_{3}\times C_{3}$ & $C_{3}$ &$C_{3}$ \\ [.5ex]
	
	S(243,40) &$C_{3}$ & 9& $C_{3}\times C_{3}$&$ C_{3}\times C_{3}$& $C_{3}$  &$C_{3}$  \\ [.5ex]
	S(243,41) &$C_{3}\times C_{3}$ & 9 & $C_{3}\times C_{3}$& $C_{3}\times C_{3}$ & $C_{3}\times C_{3}$ &$C_{3}\times C_{3}$ \\ [.5ex]
	S(243,42) &$C_{3}\times C_{3}$ & 9 & $C_{3}\times C_{3}$& $C_{3}\times C_{3}$ & $C_{3}\times C_{3}$ &$C_{3}\times C_{3}$ \\ [.5ex]
	S(243,43) &$C_{3}\times C_{3}$ & 9& $C_{3}\times C_{3}$&$ C_{3}\times C_{3}$& $C_{3}\times C_{3}$  &$C_{3}\times C_{3}$ \\ [.5ex]
	S(243,44) &$C_{3}\times C_{3}$ & 9& $C_{3}\times C_{3}$&$ C_{3}\times C_{3}$& $C_{3}\times C_{3}$  &$C_{3}\times C_{3}$ \\ [.5ex]
	S(243,45) &$C_{3}\times C_{3}$ & 9& $C_{3}\times C_{3}$&$ C_{3}\times C_{3}$& $C_{3}\times C_{3}$  &$C_{3}\times C_{3}$ \\ [.5ex]
	S(243,46) &$C_{3}\times C_{3}$ & 9 & $C_{3}\times C_{3}$& $C_{3}\times C_{3}$ & $C_{3}\times C_{3}$ &$C_{3}\times C_{3}$ \\ [.5ex]
	S(243,47) &$C_{3}\times C_{3}$ & 9 & $C_{3}\times C_{3}$& $C_{3}\times C_{3}$ & $C_{3}\times C_{3}$ &$C_{3}\times C_{3}$  \\ [.5ex]
	S(243,51) &$C_{3}$ & 9 & $C_{3}\times C_{3}$& $C_{3}\times C_{3}$ & $C_{3}$ &$C_{3}$ \\ [.5ex]
	
	S(243,52) &$C_{3}$ & 9& $C_{3}\times C_{3}$&$ C_{3}\times C_{3}$& $C_{3}$  &$C_{3}$  \\ [.5ex]
	S(243,53) &$C_{3}$ & 9 & $C_{3}\times C_{3}$& $C_{3}\times C_{3}$ & $C_{3}$ &$C_{3}$ \\ [.5ex]
	S(243,54) &$C_{3}$ & 9 & $C_{3}\times C_{3}$& $C_{3}\times C_{3}$ & $C_{3}$ &$C_{3}$ \\ [.5ex]
	S(243,55) &$C_{3}$ & 9& $C_{9}$&$ C_{3}\times C_{3}$& $C_{3}$  &$C_{3}$ \\ [.5ex]
	S(243,56) &$C_{3}$ & 9& $C_{3}$&$ C_{3}\times C_{3}$& $C_{3}$  &$C_{3}$ \\ [.5ex]
	S(243,57) &$C_{3}$ & 9& $C_{3}$&$ C_{3}\times C_{3}$& $C_{3}$  &$C_{3}$ \\ [.5ex]
	S(243,58) &$C_{3}$ & 9 & $C_{3}$& $C_{3}\times C_{3}$ & $C_{3}$ &$C_{3}$ \\ [.5ex]
	S(243,59) &$C_{3}$ & 9& $C_{3}$&$ C_{3}\times C_{3}$& $C_{3}$  &$C_{3}$ \\ [.5ex]
	S(243,60) &$C_{3}$ & 9& $C_{3}$&$ C_{3}\times C_{3}$& $C_{3}$  &$C_{3}$ 
		\label{243}
\end{tabular}}

\end{table}

\end{center}

\begin{thebibliography}{0}
  		
  
  		\bibitem{BP} Bhandari, A. K., Passi, I. B. S.: Lie nilpotency indices of  group algebras. Bull. London Math. Soc. \textbf{24}, 68-70 (1992)
  		 
  		\bibitem{sbhcms} Bhatt, S.,  Chandra, H., Sahai, M.: Group algebras of Lie nilpotency index 14. Asian-Eur. J. Math. DOI: 10.1142/S1793557120500886 (2019)
  		
  		\bibitem{BK} Bovdi, A., Kurdics, J.: Lie Properties of the Group Algebra and the Nilpotency Class of the Group of Units. Journal of Algebra. 	\textbf{212}, 28-64 (1999)
  		
  		\bibitem{bovdi2} Bovdi, V.: Modular group algebras with almost maximal Lie nilpotency indices II. Sci. Math. Jpn. \textbf(65), 267-271 (2007)
  			
  			
  		\bibitem{bovdi4} Bovdi, V.,  Juhasz, T.,  Spinelli, E.: Modular group algebras with almost maximal Lie nilpotency indices. Algebr. Represt. Theory \textbf{9}(3), 259-266  (2006)
  		
  		\bibitem{bovdi3} Bovdi, V., Spinelli, E.: Modular group algebras with maximal Lie nilpotency indices. Publ. Math. Debrecen. \textbf{65} (1-2), 243-252 (2004)
  		
  		\bibitem{bovdi5} Bovdi, V.,  Srivastava, J. B.: Lie nilpotency indices of modular group algebra. Algebra Colloq. \textbf{17}(1-2), 17-26 (2010)
  			
  		\bibitem{ChS} Chandra, H., Sahai, M. (2014).: Strongly Lie nilpotent group algebras  of  index atmost 8. J. Algebra Appl. \textbf{13}(7), 1450044(9 pages)
  		
  		\bibitem{gapg} The GAP Groups.: GAP$-$Groups, Algorithms and programming, Version 4.7.8, ({http://www.gap-system.org}) (2015)
  			
  			
  		\bibitem{rj} James, R.: Groups of order $p^6$ ($p$ is an odd prime). Math. Comp. \textbf{34}(150), 613-637 (1980)
  			
  		\bibitem{passi}Passi, I. B. S.: Group Rings and their Augmentation ideals, Lecture notes in Mathematics. Springer Verlag Berlin \#715  (1979)
  			
  			
  		\bibitem{zsx} Qinhai, Z.,  Qiangwei, S.,  Mingyai, X.: A classification of some regular p-groups and its applications. Sci. China Ser. A Math. \textbf{49}(3), 366-386  (2006)
  			
  		\bibitem{ms} Sahai, M.: Group algebras with almost minimal Lie nilpotency index. Publ. Math. Debrecen \textbf{87}(1-2), 47-55 (2015)
  		
  		\bibitem{msbs3} Sahai, M.,  Sharan, B.: A note on the upper Lie nilpotency index of a group algebra. Asian-Eur. J. Math. \textbf{12}:2050010(11 pages) (2020)
  		
  		\bibitem{msbs} Sahai, M.,  Sharan, B.: On Lie nilpotent modular group algebras. Comm. Algebra. \textbf{46}(3), 1199-1206 (2018)
  		
  		\bibitem{msbs2} Sahai, M., Sharan, B.: Modular group algebras of Lie nilpotency index $8p-6$. Asian-Eur. J. Math. \textbf{11}, 1850039(11 pages) (2018)
  		
  	
  		
  		\bibitem{Shalev4}  Shalev, A.: The  nilpotency class of the unit group of a modular group algebra III. Arch. Math. \textbf{60}, 136-145 (1993)
  			
  		\bibitem{Shalev7}  Shalev, A.: Lie dimension subgroups, Lie  nilpotency indices and the exponent of the group of normalized units. J. London Math. Soc. \textbf{43}, 23-36 (1991)
  			
  		
  		\bibitem{Shalev6}  Shalev, A.: Applications of dimension and Lie dimension subgroups to modular group algebras. Proceedings of Amitsur Conference in Ring Theory. \textbf{84}, 95 (1989)
  			
  		\bibitem{Sharmavisht} Sharma, R. K., Bist, V.: A note on Lie nilpotent group rings. Bull. Austral. Math. Soc. \textbf{45}, 503-506 (1992)
  		
  		\bibitem {rrm} Sharma, R. K., Siwach, R.,  Sahai, M.: Group algebras of Lie nilpotency index 12 and 13. Comm. Algebra. \textbf(46)(4), 1428-1446 (2018)
  			
  		\bibitem{Shs2} Sharma, R. K.,  Srivastava, J. B.: Lie ideals in group rings. J. Pure  Appl. Algebra. \textbf{63}, 67-80 (1990)
  			
  		\bibitem {rsrks}  Siwach, R., Sharma, R. K., Sahai, M.: Modular group algebras with small Lie nilpotency indices. Asian-Eur. J.  Math. \textbf{9}(4), 1650080(15 pages) (2016)
  			
  			
    	\bibitem{gjg} Stahl, G. S., Laine, J., Behm, G.: On p-group of low power order, Bachelor’s thesis. KTH Royal Institute of Technology (2010)
  		
    	\bibitem{wid} Wilkinson, D.: The group of exponent $p$ and order $p^{7}$ ($p$  any prime). J. Algebra. \textbf{118}, 109-119 (1988)
  		
  	\end{thebibliography}
    \end{document}